\numberwithin{equation}{section}
\newtheorem{theorem}{Theorem}[section]
\newtheorem{lemma}[theorem]{Lemma}
\newtheorem{corollary}[theorem]{Corollary}
\newtheorem{proposition}[theorem]{Proposition}
\newtheorem{remark}[theorem]{Remark}
\renewcommand{\epsilon}{\varepsilon}
\renewcommand{\theta}{{\vartheta}}
\renewcommand{\rightarrow}{\to}
\title[Ground states for a coupled system of Kirchhoff-Schr\"odinger equations]{Positive ground states for a subcritical and critical coupled system involving Kirchhoff-Schr\"odinger equations }  
\author[J.C. de Albuquerque]{Jos\'{e} Carlos de Albuquerque}
\author[J.M.\ do \'O]{Jo\~ao Marcos do \'O}
\author[Giovany M. Figueiredo]{Giovany M. Figueiredo}
\address[J.C. de~Albuquerque]{Instituto de Matem\'{a}tica e Estat\'{i}stica,
	%\newline\indent 
	UFG - Universidade Federal de Goi\'{a}s,
	\newline\indent
	74001-970, Goi\'{a}s-GO, Brazil}
\email{\href{mailto:joserre@gmail.com}{joserre@gmail.com}}
\address[J.M. do \'O]{Departamento de Matem\'{a}tica, UnB - Universidade de Bras\'ilia,
	%\newline\indent 
	\newline\indent
	70297-400, Bras\'{i}lia-DF, Brazil}
\email{\href{mailto:jmbo@pq.cnpq.br}{jmbo@pq.cnpq.br}}
\address[G. M. Figueiredo]{Departamento de Matem\'{a}tica, UnB - Universidade de Bras\'ilia,
\newline\indent
70297-400, Bras\'{i}lia-DF, Brazil}
\email{\href{mailto:giovany@unb.br}{giovany@unb.br}}
\thanks{Research supported in part by INCTmat/MCT/Brazil, CNPq and CAPES/Brazil}
\subjclass[2010]{35J50, 35B33, 35Q55}
\keywords{Nonlinear Kirchhoff-Schr\"odinger equations; Coupled systems; Lack of compactness; Ground states}
\begin{document}
	
%%%%%%%%%%%%%%%%%%%%%%%%%%%%%%%%%%%%%%%%%%%%%%%%%%%%%%%%%%%%%%%%%%%%%%%%%%%%%%%%%%%%%%%%%%%%%%%%%%%%%%%%%%%%%%%%%%%%%%%%%%%%%%%%%%%%%%%%%%%%%%%%%%%%%%%%%%%%%%%%%%%%%%%%%%%%%%%%%%%%%%%
%                                                                             ABSTRACT
%%%%%%%%%%%%%%%%%%%%%%%%%%%%%%%%%%%%%%%%%%%%%%%%%%%%%%%%%%%%%%%%%%%%%%%%%%%%%%%%%%%%%%%%%%%%%%%%%%%%%%%%%%%%%%%%%%%%%%%%%%%%%%%%%%%%%%%%%%%%%%%%%%%%%%%%%%%%%%%%%%%%%%%%%%%%%%%%%%%%%%%

\begin{abstract}
	In this paper we prove the existence of positive ground state solution for a class of linearly coupled systems involving Kirchhoff-Schr\"{o}dinger equations. We study the subcritical and critical case. Our approach is variational and based on minimization technique over the Nehari manifold. We also obtain a nonexistence result using a Pohozaev identity type.
\end{abstract}
	
\maketitle
%\tableofcontents

%%%%%%%%%%%%%%%%%%%%%%%%%%%%%%%%%%%%%%%%%%%%%%%%%%%%%%%%%%%%%%%%%%%%%%%%%%%%%%%%%%%%%%%%%%%%%%%%%%%%%%%%%%%%%%%%%%%%%%%%%%%%%%%%%%%%%%%%%%%%%%%%%%%%%%%%%%%%%%%%%%%%%%%%%%%%%%%%%%%%%%%
%                                                                          INTRODUCTION
%%%%%%%%%%%%%%%%%%%%%%%%%%%%%%%%%%%%%%%%%%%%%%%%%%%%%%%%%%%%%%%%%%%%%%%%%%%%%%%%%%%%%%%%%%%%%%%%%%%%%%%%%%%%%%%%%%%%%%%%%%%%%%%%%%%%%%%%%%%%%%%%%%%%%%%%%%%%%%%%%%%%%%%%%%%%%%%%%%%%%%%

\section{Introduction}

In this article we study the following class of nonlocal linearly coupled systems 
\begin{equation}\label{paper1j0}
\left\{
\begin{array}{lr}
\left(a_{1}+\alpha^{\prime}(\|u\|_{E_{1}}^{2}) \right)(-\Delta u+V_{1}(x)u) =\mu|u|^{p-2}u+\lambda(x)v, & x\in\mathbb{R}^{3},\\
\left(a_{2}+\beta^{\prime}(\|v\|_{E_{2}}^{2}) \right)(-\Delta v+V_{2}(x)v) =|v|^{q-2}v+\lambda(x)u,    & x\in\mathbb{R}^{3},
\end{array} \tag{$S_{\mu} $}
\right.
\end{equation} 
where $a_{1},a_{2}>0$, $\alpha,\beta\in C^{2}(\mathbb{R}_{+},\mathbb{R}_{+})$ and for each $i=1,2$ we consider the following weighted Sobolev space and norm
$$
E_{i  }:=\left\{w\in H^{1}(\mathbb{R}^{3}):\int_{\mathbb{R}^{3}}V_{i  }(x)w^{2}\;\mathrm{d} x<\infty\right\}, \quad \|w\|_{E_{i}}^{2}=\int_{\mathbb{R}^{3}}|\nabla w|^{2}\;\mathrm{d} x+\int_{\mathbb{R}^{3}}V_{i  }(x)w^{2}\;\mathrm{d} x.
$$
Moreover, we assume that the coupling term $\lambda(x)$ is related with the potentials by $|\lambda (x)|\leq\delta\sqrt{V_{1  }(x)V_{2  }(x)}$, for some suitable $\delta>0$. Our main contribution here is to prove the existence of positive ground states for the subcritical case, that is, when $4<p\leq q<2^{*}=6$ and for the critical case when $4<p<q=6$. In the critical case, the existence of ground states will be related with the parameter $\mu$ introduced in the first equation. In fact, we obtain the existence result when $\mu>0$ is large enough. For the critical case when $p=q=6$, we make use of the Pohozaev identity type to prove that System~\eqref{paper1j0} does not admit positive solution.

%%%%%%%%%%%%%%%%%%%%%%%%%%%%%%%%%%%%%%%%%%%%%%%%%%%%%%%%%%%%%%%%%%%%%%%%%%%%%%%%%%%%%%%%%%%%%%%%%%%%%%%%%%%%%%%%%%%%%%%%%%%%%%%%%%%%%%%%%%%%%%%%%%%%%%%%%%%%%%%%%%%%%%%%%%%%%%%%%%%%%%%
%                                                                            MOTIVATION AND PREVIOUS RESULTS
%%%%%%%%%%%%%%%%%%%%%%%%%%%%%%%%%%%%%%%%%%%%%%%%%%%%%%%%%%%%%%%%%%%%%%%%%%%%%%%%%%%%%%%%%%%%%%%%%%%%%%%%%%%%%%%%%%%%%%%%%%%%%%%%%%%%%%%%%%%%%%%%%%%%%%%%%%%%%%%%%%%%%%%%%%%%%%%%%%%%%%%
 In order to motivate our results we begin by
giving a brief survey on Kirchhoff problems. First, we mention that System~\eqref{paper1j0} is called \textit{nonlocal} due the dependence of the norms $\|.\|_{E_{1}}$ and $\|.\|_{E_{2}}$. This type of equations take care of the behavior of the solution in the whole space, which implies that the equations in \eqref{paper1j0} are no longer a pointwise identity. We point out that nonlocal problems have been applied in many different contexts, for instance, biological systems, where can be used to describe the growth and movement of a particular species. Moreover, we cite also conservation laws, applications on population density, etc. The Kirchhoff-Schr\"{o}dinger equations introduced in System~\eqref{paper1j0} are also motivated by some physical models. The first study in this direction was proposed by Kirchhoff \cite{kirchhoff} in the study of the following hyperbolic equation
\begin{equation}\label{jgj2}
\rho\frac{\partial^{2}u}{\partial t^{2}}-\left(\frac{P_{0}}{h}+\frac{E}{2L}\int_{0}^{L}\left|\frac{\partial u}{\partial x}\right|\; \mathrm{d}x\right)\frac{\partial^{2}u}{\partial x^{2}}=0.
\end{equation}
Equation \eqref{jgj2} is a generalization of the classical d'Alembert's wave equation, by considering the vibrations of the strings. 

When $\lambda\equiv0$, the uncoupled Kirchhoff-Schr\"{o}dinger equation from System~\eqref{paper1j0} is related with the following stationary type equation
\begin{equation}\label{jgj3}
\left\{
\begin{array}{cl}
u_{tt}-\displaystyle M\left(\int_{\Omega}|\nabla u|^{2}\;\mathrm{d}x\right)\Delta u=f(x,u), & \mbox{in} \; \Omega\times (0,T),\\
u=0, & \mbox{on} \; \partial\Omega\times(0,T),\\
u(x,0)=u_{0}(x), & \mbox{in} \; \Omega,\\
u_{t}(x,0)=u_{1}(x), & \mbox{in} \; \Omega, 
\end{array}
\right.
\end{equation}  
where $\Omega\subset\mathbb{R}^{N}$ is a bounded domain. In 1878, J.L. Lions \cite{lions} introduced a functional analysis approach to study problem \eqref{jgj3}. Motivated by the physical interest and impulsed by \cite{lions}, Kirchhoff problems has been extensively studied by many authors in the last years. Concerning the scalar case related with \eqref{jgj3}, there are several works with respect to the following Kirchhoff--Schr\"{o}dinger equation
\begin{equation}\label{scalar}
\left\{
\begin{array}{cl}
\displaystyle-\left(a+b\int_{\Omega}|\nabla u|^{2}\;\mathrm{d}x\right)\Delta u=f(x,u), & \mbox{in} \; \Omega,\\
u=0, & \mbox{on} \; \partial\Omega,
\end{array}
\right.
\end{equation}
where $a,b>0$ and $\Omega\subset\mathbb{R}^{3}$ is a smooth bounded domain. For existence and multiplicity of solutions for related problems to \eqref{scalar}, we refer the readers to \cite{alves,chen,he,tang} and references therein. 

Here we are concerned to study the existence of ground states for a class of nonlocal linearly coupled systems defined in the whole space $\mathbb{R}^{3}$. We are motivated by recent works which obtain existence of solutions for nonlocal systems by using a variational approach. In this direction, D. L\"{u} and J. Xiao \cite{lu} studied the following class of coupled systems involving Kirchhoff equations
\begin{equation}\label{jgj4}
\left\{
\begin{array}{ll}
\displaystyle-\left(a+b\int_{\mathbb{R}^{3}}|\nabla u|^{2}\;\mathrm{d}x\right)\Delta u+\lambda V(x)u=\frac{2\alpha}{\alpha+\beta}|u|^{\alpha-2}u|v|^{\beta}, & x\in\mathbb{R}^{3},\\
\displaystyle -\left(a+b\int_{\mathbb{R}^{3}}|\nabla v|^{2}\;\mathrm{d}x\right)\Delta v+\lambda W(x)v=\frac{2\beta}{\alpha+\beta}|u|^{\alpha}|v|^{\beta-2}v, & x\in\mathbb{R}^{3},\\
u(x)\rightarrow0, \ v(x)\rightarrow0, \ as \ |x|\rightarrow\infty,
\end{array}
\right.
\end{equation} 
where $\alpha,\beta>2$ satisfying $\alpha+\beta<2^{*}=6$. The authors obtained existence and multiplicity of solutions when the parameter $\lambda>0$ is large. For more related results to the class of coupled systems \eqref{jgj4} we refer the readers to \cite{xiao,shi}.

%%%%%%%%%%%%%%%%%%%%%%%%%%%%%%%%%%%%%%%%%%%%%%%%%%%%%%%%%%%%%%%%%%%%%%%%%%%%%%%%%%%%%%%%%%%%%%%%%%%%%%%%%%%%%%%%%%%%%%%%%%%%%%%%%%%%%%%%%%%%%%%%%%%%%%%%%%%%%%%%%%%%%%%%%%%%%%%%%%%%%%%
%                                                                           NONLOCAL OPERATORS
%%%%%%%%%%%%%%%%%%%%%%%%%%%%%%%%%%%%%%%%%%%%%%%%%%%%%%%%%%%%%%%%%%%%%%%%%%%%%%%%%%%%%%%%%%%%%%%%%%%%%%%%%%%%%%%%%%%%%%%%%%%%%%%%%%%%%%%%%%%%%%%%%%%%%%%%%%%%%%%%%%%%%%%%%%%%%%%%%%%%%%%

Motivated by the above discussion, our purpose is to study the class of coupled systems \eqref{paper1j0}, by considering a more general class of Kirchhoff-Schr\"{o}dinger equations. The class of systems \eqref{paper1j0} imposes several difficulties. The first one is that the nonlocal terms here are introduced by functions $\alpha,\beta\in C^{2}(\mathbb{R}_{+},\mathbb{R}_{+})$ which generalize the standard Kirchhoff-Schr\"{o}dinger equations (see Remark~\ref{r1}). Moreover, we deal with the ``lack of compactness" due the fact that the problem is defined in the whole space $\mathbb{R}^{3}$. Another obstacle is the fact that System~\eqref{paper1j0} involves strongly coupled Kirchhoff-Schr\"{o}dinger equations because the linear terms in the right hand side. The work is divided into three parts: The first one consists to study System~\eqref{paper1j0} in the subcritical case, that is, when $4<p\leq q<2^{*}=6$. By using a minimization method over the Nehari manifold we obtain the existence of at least one positive ground state solution for System~\eqref{paper1j0}, for any parameter $\mu>0$. After that, we study the critical case, when $4<p<q=6$. In this case, we use the parameter $\mu>0$ to control the range of the ground state energy level associated to System~\eqref{paper1j0}. Finally, we make use of a Pohozaev identity type to conclude that System~\eqref{paper1j0} does not admit positive solution when $p=q=6$. 	
	
%%%%%%%%%%%%%%%%%%%%%%%%%%%%%%%%%%%%%%%%%%%%%%%%%%%%%%%%%%%%%%%%%%%%%%%%%%%%%%%%%%%%%%%%%%%%%%%%%%%%%%%%%%%%%%%%%%%%%%%%%%%%%%%%%%%%%%%%%%%%%%%%%%%%%%%%%%%%%%%%%%%%%%%%%%%%%%%%%%%%%%%
%                                                                          ASSUMPTIONS
%%%%%%%%%%%%%%%%%%%%%%%%%%%%%%%%%%%%%%%%%%%%%%%%%%%%%%%%%%%%%%%%%%%%%%%%%%%%%%%%%%%%%%%%%%%%%%%%%%%%%%%%%%%%%%%%%%%%%%%%%%%%%%%%%%%%%%%%%%%%%%%%%%%%%%%%%%%%%%%%%%%%%%%%%%%%%%%%%%%%%%%
	
\subsection{Assumptions and main results} In order to establish a variational approach to study System~\eqref{paper1j0}, we introduce some suitable assumptions on the Kirchhoff functions and on the potentials. Throughout the paper, we assume that $a_{1},a_{2}>0$ and $\alpha,\beta\in C^{2}(\mathbb{R}_{+},\mathbb{R}_{+})$. In addition, we suppose that $\alpha$ and $\beta$ satisfy the following assumptions: 

\begin{enumerate}[label=($M_{1}$),ref=$(M_{1})$] 
	\item \label{M_1}
	$\alpha^{\prime}(s)$ and $\beta^{\prime}(t)$ are increasing on $s,t>0$.
\end{enumerate}
%\begin{enumerate}[label=($M_{2}$),ref=$(M_{2})$] 
%	\item \label{M_2}
%	$0 < a_{1}\leq M_{1,s}(s,t)$, $0 < a_{2}\leq M_{1,t}(s,t)$ and 
%	$$
%	M_{1}(s,t)\geq \frac{1}{4}\min\{a_{1},a_{2}\}(s+t), \quad \mbox{for all} \hspace{0,2cm} s,t \geq 0.
%	$$
%\end{enumerate}
\begin{enumerate}[label=($M_{2}$),ref=$(M_{2})$] 
	\item \label{M_3}
	$s\mapsto\displaystyle\frac{\alpha^{\prime}(s)}{s}$ and $t\mapsto\displaystyle\frac{\beta^{\prime}(t)}{t}$ are non-increasing on $s,t>0$.
\end{enumerate}
\begin{enumerate}[label=($M_{3}$),ref=$(M_{3})$] 
	\item \label{M_4}
	$\alpha^{\prime}(s)\leq b_{1}s$, $\beta^{\prime}(t)\leq b_{2}t$ and  
	 \[
	  \frac{1}{2}\alpha^{\prime}(s)s+\frac 12 \beta^{\prime}(t)t\leq \alpha(s)+\beta(t)\leq \alpha^{\prime}(s)s+ \beta^{\prime}(t)t, \quad \mbox{for all} \hspace{0,2cm} s,t \geq 0.
	 \]
\end{enumerate}
\begin{enumerate}[label=($M_{4}$),ref=$(M_{4})$] 
	\item \label{M_5}
	$\alpha^{\prime\prime}(s)s\leq \alpha^{\prime}(s)$ and $\beta^{\prime\prime}(t)t\leq \beta^{\prime}(t)$, for all $s,t\geq0$.
\end{enumerate}

\noindent Due the presence of the potentials $V_{1}$ and $V_{2}$, we have introduced above suitable spaces $E_{1}$ and $E_{2}$. For each $i = 1, 2$, we assume the following hypotheses:
	
	\begin{enumerate}[label=($V_{1}$),ref=$(V_{1})$] 
		\item \label{paper1A1}
		$V_{i  },\lambda \in C(\mathbb{R}^{3},\mathbb{R})$ are $\mathbb{Z}^{3}$-periodic.  
	\end{enumerate}
	
	\begin{enumerate}[label=($V_{2}$),ref=$(V_{2})$] 
		\item \label{paper1A2}
		$V_{i  }(x)\geq0$ for all $x\in\mathbb{R}^{3}$ and 
		$$
		 \inf_{u\in E_{i  }}\left\{\int_{\mathbb{R}^{3}}|\nabla u|^{2}\;\mathrm{d} x+\int_{\mathbb{R}^{3}}V_{i  }(x)u^{2}\;\mathrm{d} x: \int_{\mathbb{R}^{3}}u^{2}\;\mathrm{d} x=1\right\}>0.
		$$			
	\end{enumerate}
	
	\begin{enumerate}[label=($V_{3}$),ref=$(V_{3})$] 
		\item \label{paper1A3}
		$|\lambda (x)|\leq\delta\sqrt{V_{1  }(x)V_{2  }(x)}$, for some $\delta\in\left(0, \min\{a_{1},a_{2} \}\right)$, for all $x\in\mathbb{R}^{3}$.	 
	\end{enumerate}
	
	\begin{enumerate}[label=($V_{3}'$),ref=$(V_{3}')$] 
		\item \label{paper1A9}
		Assumption \ref{paper1A3} holds and $\lambda (x)>0$, for all $x\in\mathbb{R}^{3}$.
	\end{enumerate}

\noindent  In view of \cite[Lemma 2.1]{s}, $E_{i}$ is a Hilbert space continuously embedded into $L^{r}(\mathbb{R}^{3})$ for $r\in[2,6]$ and $i=1,2$. We set the product space $E =E_{1  }\times E_{2  }$. We have that $E $ is a Hilbert space when endowed with the inner product
$$
((u,v),(z,w))_{E }=\int_{\mathbb{R}^{3}}\left(\nabla u\nabla z+V_{1  }(x)uz+\nabla v\nabla w+V_{2  }(x)vw\right)\; \mathrm{d} x,
$$
to which corresponds the induced norm $\|(u,v)\|_{E }^{2}=((u,v),(u,v))_{E }=\|u\|_{E_{1  }}^{2}+\|v\|_{E_{2  }}^{2}$. Associated to System~\eqref{paper1j0} we have the energy functional $I\in C^{1}(E,\mathbb{R})$ defined by
	 \[
	  I (u,v)=\frac{1}{2}\left(a_{1}\|u\|_{E_{1}}^{2}+a_{2}\|v\|_{E_{2}}^{2}\right)+\frac{1}{2}(\alpha(\|u\|_{E_{1}}^{2})+\beta(\|v\|_{E_{2}}^{2}))-\frac{\mu}{p}\|u\|_{p}^{p}-
	  \frac{1}{q}\|v\|_{q}^{q}-\int_{\mathbb{R}^{3}}\lambda (x)uv\; \mathrm{d} x.
	 \]
	By standard arguments it can be checked that critical points of $I$ correspond to weak solutions of \eqref{paper1j0} and conversely. We say that a weak solution $(u_{0},v_{0})\in E$ for System \eqref{paper1j0} is a \textit{ground state solution} (or least energy solution) if $ I(u_{0},v_{0})\leq I(u,v)$ for any other weak solution $(u,v)\in E\backslash\{(0,0)\}$. We say that $(u_{0},v_{0})$ is nonnegative (nonpositive) if $u_{0},v_{0}\geq0$ ($u_{0},v_{0}\leq0$) and positive (negative) if $u_{0},v_{0}>0$ ($u_{0},v_{0}<0$) respectively.
	
	\noindent Now we are able to state our main results.
	
	\begin{theorem}\label{paper1A}
		Assume that \ref{M_1}-\ref{M_5} and \ref{paper1A1}-\ref{paper1A3} hold. If $4<p\leq q<6$, then there exists a nonnegative ground state solution for System~\eqref{paper1j0}, for all $\mu\geq0$. If \ref{paper1A9} holds, then the ground state is positive.
	\end{theorem}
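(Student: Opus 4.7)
The plan is to seek a ground state as a minimizer of $I$ over the Nehari manifold
$$
\mathcal{N}:=\{(u,v)\in E\setminus\{(0,0)\}:\ \langle I'(u,v),(u,v)\rangle=0\},
$$
restoring compactness via the $\mathbb{Z}^3$-translation invariance granted by \ref{paper1A1}. As a preliminary, the bound $|\lambda(x)|\leq\delta\sqrt{V_1 V_2}$ with $\delta<\min\{a_1,a_2\}$ together with Cauchy--Schwarz gives
$$
\Bigl|\int_{\mathbb{R}^3}\lambda uv\,dx\Bigr|\leq\tfrac{\delta}{2}\bigl(\|u\|_{E_1}^2+\|v\|_{E_2}^2\bigr),
$$
so the quadratic form $Q(u,v):=a_1\|u\|_{E_1}^2+a_2\|v\|_{E_2}^2-2\!\int\!\lambda uv$ is equivalent to $\|(u,v)\|_E^2$. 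Combined with the growth $\alpha'(s)\le b_1 s$, $\beta'(t)\le b_2 t$ from \ref{M_4} and $\alpha,\beta\geq 0$, the functional $I$ is well defined and of class $C^1$ on $E$.

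The first step is to show that for every $(u,v)\in E\setminus\{(0,0)\}$ the fibering map $\gamma(t):=I(tu,tv)$ has a unique positive critical point $t_{u,v}$ at which it attains its maximum. Indeed, $\gamma'(t)/t=0$ can be rewritten as
$$
\tfrac{1}{t^2}Q(u,v)+\tfrac{\alpha'(t^2\|u\|_{E_1}^2)}{t^2}\|u\|_{E_1}^2+\tfrac{\beta'(t^2\|v\|_{E_2}^2)}{t^2}\|v\|_{E_2}^2=\mu t^{p-4}\|u\|_p^p+t^{q-4}\|v\|_q^q,
$$
whose left-hand side is strictly decreasing in $t$ thanks to \ref{M_3} (non-increase of $\alpha'(s)/s$ and $\beta'(t)/t$) and whose right-hand side is strictly increasing because $p,q>4$. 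Next, the identity $I(u,v)=I(u,v)-\tfrac{1}{p}\langle I'(u,v),(u,v)\rangle$ together with the sandwich bound $\alpha(s)+\beta(t)\geq\tfrac12(\alpha'(s)s+\beta'(t)t)$ from \ref{M_4} and $p\leq q$ yields, for $(u,v)\in\mathcal{N}$,
$$
I(u,v)\geq \Bigl(\tfrac12-\tfrac1p\Bigr)Q(u,v),
$$
which through the norm equivalence shows $c:=\inf_{\mathcal{N}}I>0$ and that any minimizing sequence $(u_n,v_n)\subset\mathcal{N}$ is bounded in $E$. By Ekeland's principle applied to $I$ on $\mathcal{N}$ I may moreover assume $I'(u_n,v_n)\to 0$ in $E^*$.

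The principal obstacle is the lack of compactness of $E\hookrightarrow L^r(\mathbb{R}^3)\times L^r(\mathbb{R}^3)$. A Lions-type dichotomy rules out vanishing, since $(u_n,v_n)\to 0$ in $L^r\times L^r$ for $r\in(2,6)$ would force, via the Nehari identity, $\|(u_n,v_n)\|_E\to 0$, contradicting $c>0$. Hence there exist $y_n\in\mathbb{Z}^3$ and $(u_0,v_0)\in E\setminus\{(0,0)\}$ with $(u_n(\cdot-y_n),v_n(\cdot-y_n))\rightharpoonup(u_0,v_0)$ along a subsequence; the $\mathbb{Z}^3$-periodicity of $V_1,V_2,\lambda$ makes $I$ and $\mathcal{N}$ invariant, so after relabelling we may assume $(u_n,v_n)\rightharpoonup(u_0,v_0)\neq(0,0)$.

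To conclude — and this is where I expect the main technical difficulty — the limit in the nonlocal Kirchhoff terms must be handled carefully, because weak convergence only gives $\|u_n\|_{E_1}^2\to\|u_0\|_{E_1}^2+\ell_1$ with a possible defect $\ell_1\geq 0$, so a priori $\alpha'(\|u_n\|_{E_1}^2)\to\alpha'(\|u_0\|_{E_1}^2+\ell_1)$. Testing $I'(u_n,v_n)\to 0$ against $(\varphi,\psi)\in C_c^\infty\times C_c^\infty$, then combining the resulting Euler equation for $(u_0,v_0)$ with the Nehari identity and Br\'ezis--Lieb applied to the $L^p,L^q$ terms, the monotonicity encoded in \ref{M_3} forces $\ell_1=\ell_2=0$ and hence strong convergence in $E$. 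Then $(u_0,v_0)\in\mathcal{N}$ with $I(u_0,v_0)=c$. Replacing $(u_0,v_0)$ by $(|u_0|,|v_0|)$, which remains in $\mathcal{N}$ under \ref{paper1A9} since $\int\lambda|u_0||v_0|\geq\int\lambda u_0 v_0$, gives a nonnegative ground state; the strong maximum principle applied componentwise (noting $a_i+\alpha'(\|u_0\|_{E_i}^2)\geq a_i>0$) delivers the claimed strict positivity under \ref{paper1A9}.
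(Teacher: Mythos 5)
Your overall strategy (Nehari minimization, coercivity of the quadratic form from $(V_3)$, uniqueness of the fibering maximum from the monotonicity hypotheses, Lions' lemma plus $\mathbb{Z}^3$-translations to get a nontrivial weak limit) matches the paper's. The genuine gap is in the step you yourself flag as the main difficulty: identifying the weak limit as a critical point of $I$. You assert that testing $I'(u_n,v_n)\to 0$, the Euler equation for $(u_0,v_0)$, the Nehari identity and Br\'ezis--Lieb ``force $\ell_1=\ell_2=0$ and hence strong convergence in $E$.'' This does not follow as described. Subtracting the frozen Euler identity from the Nehari identity and applying Br\'ezis--Lieb leaves you with
\begin{equation*}
\lim_n\Bigl[\,Q(u_n-u_0,v_n-v_0)+\alpha'(\varrho_0^2)\ell_1+\beta'(\varrho_1^2)\ell_2\,\Bigr]=\lim_n\Bigl[\,\mu\|u_n-u_0\|_p^p+\|v_n-v_0\|_q^q\,\Bigr],
\end{equation*}
and the right-hand side need not vanish: on $\mathbb{R}^3$ the remainders $u_n-u_0$, $v_n-v_0$ may carry mass to infinity (dichotomy), so no contradiction with $\ell_1+\ell_2>0$ is obtained without a full profile decomposition, and the non-increase of $\alpha'(s)/s$ plays no role in the computation you describe. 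The paper avoids strong convergence altogether: since $(u_0,v_0)$ solves the system with the frozen coefficients $\alpha'(\varrho_0^2),\beta'(\varrho_1^2)$ and $(M_1)$ plus weak lower semicontinuity give $\alpha'(\varrho_0^2)\ge\alpha'(\|u_0\|_{E_1}^2)$, a strict inequality would yield $I'(u_0,v_0)(u_0,v_0)<0$, hence some $\bar t\in(0,1)$ with $(\bar t u_0,\bar t v_0)\in\mathcal{N}$; the map $t\mapsto (I-\tfrac14 I')(tu_0,tv_0)$ is strictly increasing (this is where $(M_4)$, i.e.\ $\alpha''(s)s\le\alpha'(s)$, is used through the auxiliary function $\mathcal{H}$), and Fatou's lemma then gives $c_{\mathcal{N}}<c_{\mathcal{N}}$. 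That contradiction, not strong convergence, is the mechanism; you need to supply it or an equivalent.

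Two smaller points. First, $(|u_0|,|v_0|)$ does \emph{not} in general remain in $\mathcal{N}$: replacing $\int\lambda u_0v_0$ by $\int\lambda|u_0||v_0|$ changes the Nehari identity. One must reproject, i.e.\ take $t_0>0$ with $(t_0|u_0|,t_0|v_0|)\in\mathcal{N}$ and use $I(t_0|u_0|,t_0|v_0|)\le I(t_0u_0,t_0v_0)\le\max_{t\ge0}I(tu_0,tv_0)=I(u_0,v_0)=c_{\mathcal{N}}$. Second, before invoking the strong maximum principle you must exclude that one component vanishes identically: if, say, $\bar v_0\equiv0$, testing the second equation against arbitrary $\phi\in C_0^\infty(\mathbb{R}^3)$ gives $\int\lambda\bar u_0\phi=0$, and the strict positivity of $\lambda$ in $(V_3')$ forces $\bar u_0\equiv0$, a contradiction; only then does the maximum principle apply componentwise.
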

	
	\begin{theorem}\label{paper1B}
		Assume that  \ref{M_1}-\ref{M_5} and  \ref{paper1A1}-\ref{paper1A3} hold. If $4<p<q=6$, then there exists $\mu_{0}>0$ such that System~\eqref{paper1j0} possesses a nonnegative ground state solution $(u_{0},v_{0})\in E $, for all $\mu\geq\mu_{0}$. If \ref{paper1A9} holds, then the ground state is positive.
	\end{theorem}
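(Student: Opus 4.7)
I would follow the same Nehari-manifold minimization scheme used in Theorem~\ref{paper1A}, setting
$$\mathcal{N}_\mu = \{(u,v) \in E \setminus \{(0,0)\} : I'(u,v)(u,v) = 0\} \quad \text{and} \quad c_\mu = \inf_{\mathcal{N}_\mu} I.$$
The crucial new obstruction is the loss of compactness at the critical exponent $q=6$: the embedding $E_2 \hookrightarrow L^6(\mathbb{R}^3)$ fails to be compact, so minimizing sequences may lose mass through a concentrating bubble. The role of the parameter $\mu$ is precisely to push $c_\mu$ below a critical threshold $c^\ast$ (related to the best Sobolev constant $S$ and to the Kirchhoff coefficient $a_2$), below which a splitting and concentration-compactness argument restores compactness.

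\textbf{Nehari structure and boundedness.} Assumptions $(M_1)$–$(M_2)$ ensure that for each $(u,v) \neq (0,0)$ the fiber map $t \mapsto I(tu,tv)$ has a unique positive maximum, so $\mathcal{N}_\mu$ is a natural $C^1$-constraint. For any sequence $(u_n,v_n) \subset \mathcal{N}_\mu$ with $I(u_n,v_n) \to c_\mu$, combining the identity $I(u_n,v_n)-\tfrac{1}{4}I'(u_n,v_n)(u_n,v_n) = I(u_n,v_n)$ with the lower inequality in $(M_3)$, the bound $\bigl|\int\lambda uv\bigr| \le \delta\|u\|_{E_1}\|v\|_{E_2}$ coming from $(V_3)$ together with $\delta < \min\{a_1,a_2\}$, and the fact that $p,q>4$, yields
$$c_\mu + o(1) \ge \tfrac{1}{4}\bigl(\min\{a_1,a_2\}-\delta\bigr)\|(u_n,v_n)\|_E^2 + \bigl(\tfrac{1}{4}-\tfrac{1}{p}\bigr)\mu\|u_n\|_p^p + \bigl(\tfrac{1}{4}-\tfrac{1}{6}\bigr)\|v_n\|_6^6.$$
Hence $(u_n,v_n)$ is bounded in $E$ and in $L^p(\mathbb{R}^3) \times L^6(\mathbb{R}^3)$.

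\textbf{Beating the threshold via large $\mu$.} Fix a nonzero $\phi \in C_c^\infty(\mathbb{R}^3)$; by $(M_1)$–$(M_3)$ and $p>4$, the scalar equation $a_1 t^2\|\phi\|_{E_1}^2 + \alpha'(t^2\|\phi\|_{E_1}^2) t^2\|\phi\|_{E_1}^2 = \mu t^p\|\phi\|_p^p$ has a unique positive solution $t_\mu$, which places $(t_\mu\phi,0)$ on $\mathcal{N}_\mu$. A direct balance shows $t_\mu \sim \mu^{-1/(p-2)} \to 0$ and
$$I(t_\mu\phi,0) = \tfrac{a_1}{2} t_\mu^2 \|\phi\|_{E_1}^2 + \tfrac{1}{2}\alpha(t_\mu^2\|\phi\|_{E_1}^2) - \tfrac{\mu}{p}t_\mu^p\|\phi\|_p^p \longrightarrow 0 \quad \text{as } \mu \to \infty.$$
Since the autonomous critical limit problem associated with the second equation has a strictly positive ground-state level $c^\ast > 0$, this gives $\mu_0 > 0$ such that $c_\mu \le I(t_\mu\phi,0) < c^\ast$ for every $\mu \ge \mu_0$.

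\textbf{Compactness and positivity.} Apply Ekeland's principle on $\mathcal{N}_\mu$ to produce a bounded $(PS)_{c_\mu}$ sequence $(u_n,v_n)$ for $I$. The $\mathbb{Z}^3$-periodicity in $(V_1)$ allows a lattice translation to prevent vanishing, so up to a subsequence $(u_n,v_n) \rightharpoonup (u_0,v_0)$ weakly in $E$, with $I'(u_0,v_0) = 0$. Writing $w_n = v_n - v_0$ and applying Brezis–Lieb together with the Sobolev inequality $S\|w_n\|_6^2 \le \|\nabla w_n\|_2^2$ shows that any surviving critical bubble would contribute an energy loss of at least $c^\ast$, which is forbidden by $c_\mu < c^\ast$; thus $v_n \to v_0$ in $L^6$, and the subcritical pieces upgrade this to strong convergence in $E$, proving that $(u_0,v_0)$ is a ground state. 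Under $(V_3')$ one replaces $(u_0,v_0)$ by $(|u_0|,|v_0|)$ (still on $\mathcal{N}_\mu$ because $\lambda > 0$ increases the coupling contribution) and invokes the strong maximum principle in each equation to obtain strict positivity. The main difficulty in this plan is the critical step: pinpointing the correct $c^\ast$ in the presence of the nonlocal Kirchhoff perturbation and excluding the bubble carefully, while ensuring that the test pair $(t_\mu\phi,0)$, which exploits only the subcritical first equation, suffices to place $c_\mu$ below it uniformly.
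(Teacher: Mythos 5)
Your overall strategy (Nehari minimization, an energy threshold beaten by taking $\mu$ large, then concentration--compactness to restore compactness at $q=6$) matches the paper's, and your test pair $(t_\mu\phi,0)$ is a legitimate, even simpler, route to the threshold: since $p>4$ and \ref{M_4} gives $\alpha^{\prime}(s)\le b_1 s$, the Nehari constraint forces $t_\mu\to 0$ and $\mu t_\mu^{p}\|\phi\|_p^p\to 0$, hence $c_{\mathcal N}\le I(t_\mu\phi,0)\to 0$, which drops below any fixed positive threshold for $\mu$ large (the paper reaches the same conclusion with a two-component family $(\mu u,\mu v)$ in Proposition~\ref{paper1mu}).

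The genuine gap is in your compactness step. The assertion that ``the $\mathbb{Z}^3$-periodicity allows a lattice translation to prevent vanishing'' is not true at the critical exponent: under Lions-type vanishing one only obtains $u_n\to 0$ and $v_n\to 0$ in $L^t(\mathbb{R}^3)$ for $2<t<6$, while $\|v_n\|_6$ may remain bounded away from zero (a concentrating bubble vanishes in the Lions sense), so no lattice translation by itself yields a nontrivial weak limit; and your Brezis--Lieb bubble-exclusion cannot help here because it presupposes that a nontrivial weak limit is already in hand. This is precisely where the threshold must be spent: if vanishing occurs, the Nehari identity \eqref{paper1j8}, Lemma~\ref{lambda} and the Sobolev inequality \eqref{sharp} force $A_\mu=\lim\|v_n\|_6^6\ge\left[\left(\min\{a_1,a_2\}-\delta\right)S\right]^{3/2}$ and hence $c_{\mathcal N}\ge\left(\frac14-\frac1p\right)\left[\left(\min\{a_1,a_2\}-\delta\right)S\right]^{3/2}$, contradicting the threshold estimate; this computation is also what pins down the explicit value of your unspecified $c^{\ast}$ --- it is not the ground-state level of the autonomous critical problem for the second equation. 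Separately, ``the subcritical pieces upgrade this to strong convergence in $E$'' glosses over the Kirchhoff-specific obstruction of identifying $\lim\alpha^{\prime}(\|u_n\|_{E_1}^2)$ with $\alpha^{\prime}(\|u_0\|_{E_1}^2)$: weak convergence does not give norm convergence, and the paper closes this via the monotonicity hypotheses \ref{M_1}, \ref{M_5} and a Fatou comparison on $\mathcal N$ (Proposition~\ref{p2}), an argument you would still need to carry out (and check at $q=6$).
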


\begin{theorem}\label{paper1C}
	Let $p=q=6$. In addition, for $i=1,2$ we consider the following assumptions:
	\begin{enumerate}[label=($V_4$),ref=$(V_4)$] 
		\item \label{V4}
		$V_{i}\in C^{1}(\mathbb{R}^{3})$ and $0\leq\langle\nabla V_{i}(x),x\rangle\leq CV_{i}(x)$.
	\end{enumerate}
	
	\begin{enumerate}[label=($V_5$),ref=$(V_5)$] 
		\item \label{V5}
		$\lambda\in C^{1}(\mathbb{R}^{3})$, $|\langle\nabla\lambda(x),x\rangle|\leq C|\lambda(x)|$ and $\langle\nabla\lambda(x),x\rangle\leq0$.
	\end{enumerate}
	Then, System~$\eqref{paper1j0}$ has no positive classical solution for all $\mu\geq0$.
\end{theorem}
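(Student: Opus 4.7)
The strategy is the classical Pohozaev-versus-energy comparison at the critical exponent $2^{*}=6$, adapted to the linearly coupled system. The decisive observation is that once a positive classical solution $(u,v)$ is fixed, the Kirchhoff coefficients $A:=a_{1}+\alpha'(\|u\|_{E_{1}}^{2})$ and $B:=a_{2}+\beta'(\|v\|_{E_{2}}^{2})$ are merely positive constants (with $A\geq a_{1}$, $B\geq a_{2}$), so each equation of \eqref{paper1j0} reduces to a semilinear PDE with constant principal part and the usual Pohozaev machinery applies.

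First I would test the two equations against $u$ and $v$ respectively, integrate on $\mathbb{R}^{3}$ and add, obtaining the energy identity
\[
A\!\int|\nabla u|^{2}+A\!\int V_{1}u^{2}+B\!\int|\nabla v|^{2}+B\!\int V_{2}v^{2}=\mu\!\int u^{6}+\!\int v^{6}+2\!\int\lambda uv. \tag{E}
\]
Next I would test the two equations against the Pohozaev multipliers $\langle x,\nabla u\rangle$ and $\langle x,\nabla v\rangle$. The standard $N=3$ identities
\[
\int(-\Delta u)\langle x,\nabla u\rangle=-\tfrac12\!\int|\nabla u|^{2},\quad \int V_{1}u\langle x,\nabla u\rangle=-\tfrac32\!\int V_{1}u^{2}-\tfrac12\!\int\langle\nabla V_{1},x\rangle u^{2},\quad \int u^{5}\langle x,\nabla u\rangle=-\tfrac12\!\int u^{6},
\]
together with the coupling computation
\[
\int\lambda\bigl(v\langle x,\nabla u\rangle+u\langle x,\nabla v\rangle\bigr)=\int\lambda\langle x,\nabla(uv)\rangle=-3\!\int\lambda uv-\!\int\langle\nabla\lambda,x\rangle uv,
\]
yield, after summing the two resulting equations and rearranging, the Pohozaev identity
\[
\tfrac{A}{2}\!\int|\nabla u|^{2}+\tfrac{3A}{2}\!\int V_{1}u^{2}+\tfrac{A}{2}\!\int\langle\nabla V_{1},x\rangle u^{2}+(\text{twin in }v,V_{2},B)=\tfrac{\mu}{2}\!\int u^{6}+\tfrac12\!\int v^{6}+3\!\int\lambda uv+\!\int\langle\nabla\lambda,x\rangle uv. \tag{P}
\]
The critical-exponent magic is that $(P)-\tfrac{1}{2}(E)$ cancels both the gradient blocks and the power-six blocks, leaving
\[
\int\!\bigl[AV_{1}u^{2}+BV_{2}v^{2}-2\lambda uv\bigr]+\tfrac12\!\int\!\bigl[A\langle\nabla V_{1},x\rangle u^{2}+B\langle\nabla V_{2},x\rangle v^{2}\bigr]-\!\int\!\langle\nabla\lambda,x\rangle uv=0.
\]

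To conclude, I would show that each of these three integrals is individually nonnegative and therefore vanishes. For the first, the AM-GM bound $2\sqrt{V_{1}V_{2}}|uv|\leq V_{1}u^{2}+V_{2}v^{2}$ combined with \ref{paper1A3} gives pointwise $AV_{1}u^{2}+BV_{2}v^{2}-2\lambda uv\geq(a_{1}-\delta)V_{1}u^{2}+(a_{2}-\delta)V_{2}v^{2}\geq 0$, with $a_{i}-\delta>0$. The second integrand is nonnegative by \ref{V4}, and $-\langle\nabla\lambda,x\rangle uv\geq 0$ because $u,v>0$ and $\langle\nabla\lambda,x\rangle\leq 0$ by \ref{V5}. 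Forcing each to vanish yields $V_{1}u^{2}\equiv V_{2}v^{2}\equiv 0$ on $\mathbb{R}^{3}$; since $u,v>0$ everywhere and $V_{i}$ is continuous, this forces $V_{1}\equiv V_{2}\equiv 0$. But then the infimum in \ref{paper1A2} collapses to $\inf\{\|\nabla w\|_{2}^{2}:\|w\|_{2}=1\}=0$ on $\mathbb{R}^{3}$ (via the $L^{2}$-preserving rescaling $w_{n}(x)=n^{-3/2}w(x/n)$), contradicting \ref{paper1A2}. Hence no positive classical solution can exist.

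The main technical obstacle is justifying the Pohozaev identity rigorously, namely the integrability of $|\langle\nabla V_{i},x\rangle|u^{2}$ and $|\langle\nabla\lambda,x\rangle|uv$, and the vanishing of the boundary contributions arising in the integrations by parts against $\langle x,\nabla u\rangle$. This is handled in the standard way by testing against a smooth cutoff $\varphi(x/R)\langle x,\nabla u\rangle$ and passing to the limit $R\to\infty$, using the growth bounds $|\langle\nabla V_{i},x\rangle|\leq CV_{i}$ and $|\langle\nabla\lambda,x\rangle|\leq C|\lambda|$ from \ref{V4}--\ref{V5}, together with the decay at infinity of the classical solution furnished by elliptic regularity applied to the linear, constant-principal-part equations satisfied by $u$ and $v$.
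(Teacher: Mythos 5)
Your proposal is correct and follows essentially the same route as the paper: you derive the same Pohozaev identity (Lemma~\ref{poho}, in an equivalent rearranged form), combine it with the energy identity obtained by testing with $(u,v)$, and run the sign analysis through \ref{paper1A3}, \ref{V4} and \ref{V5}. The only divergence is the endgame: where the paper closes with a strict-inequality chain (which tacitly needs $\int\lambda(x)uv>0$), you instead force each of the three nonnegative terms to vanish, conclude $V_{1}\equiv V_{2}\equiv 0$, and contradict \ref{paper1A2} by an $L^{2}$-preserving rescaling --- an equally valid, and arguably more robust, finish.
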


%%%%%%%%%%%%%%%%%%%%%%%%%%%%%%%%%%%%%%%%%%%%%%%%%%%%%%%%%%%%%%%%%%%%%%%%%%%%%%%%%%%%%%%%%%%%%%%%%%%%%%%%%%%%%%%%%%%%%%%%%%%%%%%%%%%%%%%%%%%%%%%%%%%%%%%%%%%%%%%%%%%%%%%%%%%%%%%%%%%%%%%
%                                                                       REMARKS ON OUR ASSUMPTIONS
%%%%%%%%%%%%%%%%%%%%%%%%%%%%%%%%%%%%%%%%%%%%%%%%%%%%%%%%%%%%%%%%%%%%%%%%%%%%%%%%%%%%%%%%%%%%%%%%%%%%%%%%%%%%%%%%%%%%%%%%%%%%%%%%%%%%%%%%%%%%%%%%%%%%%%%%%%%%%%%%%%%%%%%%%%%%%%%%%%%%%%%

\begin{remark}\label{r1}
A typical example of functions verifying assumptions
	\ref{M_1}-\ref{M_5} are given by
	$$
	 \alpha(s)=b_{1}\frac{s^2}{2} \quad \mbox{and} \quad \beta(t)=b_{2}\frac{t^2}{2},	
	$$
	where $b_{1},b_{2}>0$. This is the example that was considered in \cite{kirchhoff} in the scalar case. More generally, 
	$$
	\displaystyle  \alpha(s)=b_{1}\frac{s^2}{2}+\displaystyle \sum_{i=1}^{k}a_{i}s^{\gamma_{i}} \quad \mbox{and} \quad \beta(t)=b_{2}\frac{t^2}{2}+\displaystyle \sum_{i=1}^{k}b_{i}t^{\gamma_{i}},
	$$
	with $a_{i},b_{i}> 0$ and $\gamma_{i}\in (0,1)$ for all $i\in	\{1,2,\ldots, k\}$ verify hypotheses \ref{M_1}-\ref{M_5}. Another example is given by
	 \[
	  \alpha(s)=\int_{0}^{s}\ln(1+r)\,\mathrm{d}r \quad \mbox{and} \quad \beta(t)=\int_{0}^{t}\ln(1+r)\,\mathrm{d}r.
	 \]
	In the present work we introduce a class of Kirchhoff-Schr\"{o}dinger coupled systems by considering different types of functions $\alpha$ and $\beta$.
\end{remark}

%%%%%%%%%%%%%%%%%%%%%%%%%%%%%%%%%%%%%%%%%%%%%%%%%%%%%%%%%%%%%%%%%%%%%%%%%%%%%%%%%%%%%%%%%%%%%%%%%%%%%%%%%%%%%%%%%%%%%%%%%%%%%%%%%%%%%%%%%%%%%%%%%%%%%%%%%%%%%%%%%%%%%%%%%%%%%%%%%%%%%%%
%                                                                             NOTATION
%%%%%%%%%%%%%%%%%%%%%%%%%%%%%%%%%%%%%%%%%%%%%%%%%%%%%%%%%%%%%%%%%%%%%%%%%%%%%%%%%%%%%%%%%%%%%%%%%%%%%%%%%%%%%%%%%%%%%%%%%%%%%%%%%%%%%%%%%%%%%%%%%%%%%%%%%%%%%%%%%%%%%%%%%%%%%%%%%%%%%%%

\subsection{Notation}
Let us introduce the following notation:	
\begin{itemize}
	\item $C$, $\tilde{C}$, $C_{1}$, $C_{2}$,... denote positive constants (possibly different).
	\item $o_{n}(1)$ denotes a sequence which converges to $0$ as $n\rightarrow\infty$.
	\item The norm in $L^{p}(\mathbb{R}^{3})$ and $L^{\infty}(\mathbb{R}^{3})$, will be denoted respectively by $\|\cdot\|_{p}$ and $\|\cdot\|_{\infty}$.
	\item The norm in $L^{p}(\mathbb{R}^{3})\times L^{p}(\mathbb{R}^{3})$ is given by $\|(u,v)\|_{p}=\left(\|u\|^{p}_{p}+\|v\|^{p}_{p}\right)^{1/p}$.
	\item We write $\int u$ instead of $\int_{\mathbb{R}^{3}}u\;\mathrm{d}x$.
	\item We denote by $S$ the sharp constant of the embedding $D^{1,2}(\mathbb{R}^{3})\hookrightarrow L^{6}(\mathbb{R}^{3})$
	\begin{equation}\label{sharp}
	S\left(\int|u|^{6}\right)^{1/3}\leq \int|\nabla u|^{2},
	\end{equation}
	where $D^{1,2}(\mathbb{R}^{3}):=\{u\in L^{6}(\mathbb{R}^{3}):|\nabla u|\in L^{2}(\mathbb{R}^{3})\}$.	
\end{itemize}

%%%%%%%%%%%%%%%%%%%%%%%%%%%%%%%%%%%%%%%%%%%%%%%%%%%%%%%%%%%%%%%%%%%%%%%%%%%%%%%%%%%%%%%%%%%%%%%%%%%%%%%%%%%%%%%%%%%%%%%%%%%%%%%%%%%%%%%%%%%%%%%%%%%%%%%%%%%%%%%%%%%%%%%%%%%%%%%%%%%%%%%
%                                                                              OUTLINE
%%%%%%%%%%%%%%%%%%%%%%%%%%%%%%%%%%%%%%%%%%%%%%%%%%%%%%%%%%%%%%%%%%%%%%%%%%%%%%%%%%%%%%%%%%%%%%%%%%%%%%%%%%%%%%%%%%%%%%%%%%%%%%%%%%%%%%%%%%%%%%%%%%%%%%%%%%%%%%%%%%%%%%%%%%%%%%%%%%%%%%%

\subsection{Outline}
	In the forthcoming Section we introduce the Nehari manifold associated to System~\eqref{paper1j0}. In Section~\ref{s4} we study the existence of ground states for System~\eqref{paper1j0} in the subcritical case. Section~\ref{s5} is devoted to the critical case. In Section~\ref{s6} we make use of a Pohozaev identity type to prove the nonexistence result.

%%%%%%%%%%%%%%%%%%%%%%%%%%%%%%%%%%%%%%%%%%%%%%%%%%%%%%%%%%%%%%%%%%%%%%%%%%%%%%%%%%%%%%%%%%%%%%%%%%%%%%%%%%%%%%%%%%%%%%%%%%%%%%%%%%%%%%%%%%%%%%%%%%%%%%%%%%%%%%%%%%%%%%%%%%%%%%%%%%%%%%%
%                                                                    THE NEHARI MANIFOLD
%%%%%%%%%%%%%%%%%%%%%%%%%%%%%%%%%%%%%%%%%%%%%%%%%%%%%%%%%%%%%%%%%%%%%%%%%%%%%%%%%%%%%%%%%%%%%%%%%%%%%%%%%%%%%%%%%%%%%%%%%%%%%%%%%%%%%%%%%%%%%%%%%%%%%%%%%%%%%%%%%%%%%%%%%%%%%%%%%%%%%%%

\section{The Nehari manifold}\label{s3}

The main goal of the paper is to prove the existence of ground state solutions. For this purpose, we use a minimization technique over the Nehari manifold. In order to obtain some properties for the Nehari manifold, we have the following technical lemma:
\begin{lemma}\label{lambda}
	If \ref{paper1A3} holds, then we have 
	\begin{equation}\label{gjj1}
	 a_{1}\|u\|_{E_{1}}^{2}+a_{2}\|v\|_{E_{2}}^{2} - 2 \displaystyle\int\lambda (x)uv \geq \left(\min\{a_{1},a_{2}\}-\delta\right)\|(u,v)\|^{2}_{E }.
	\end{equation} 
\end{lemma}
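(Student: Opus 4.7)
The plan is to control the cross term $\int \lambda(x) uv$ by using the pointwise bound \ref{paper1A3} combined with a weighted Cauchy--Schwarz (or Young) inequality, and then absorb the result into the two norm squares using that $\delta < \min\{a_1,a_2\}$.

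Concretely, I would first estimate
\[
2\left|\int \lambda(x) uv\right| \leq 2\delta \int \sqrt{V_1(x)V_2(x)}\,|u||v| \leq \delta \int V_1(x) u^2 + \delta \int V_2(x) v^2,
\]
where the first step uses \ref{paper1A3} and the second uses $2ab \leq a^2 + b^2$ applied to $a=\sqrt{V_1}\,|u|$ and $b=\sqrt{V_2}\,|v|$. Inserting this into the left-hand side of \eqref{gjj1} gives
\[
a_1\|u\|_{E_1}^2 + a_2\|v\|_{E_2}^2 - 2\int\lambda(x)uv \geq a_1\|u\|_{E_1}^2 - \delta\!\int V_1 u^2 + a_2\|v\|_{E_2}^2 - \delta\!\int V_2 v^2.
\]

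The second step is to rewrite each $u$-piece (and similarly for $v$) by splitting the $E_1$-norm into gradient and potential parts:
\[
a_1\|u\|_{E_1}^2 - \delta\!\int V_1 u^2 = a_1\!\int|\nabla u|^2 + (a_1-\delta)\!\int V_1 u^2 \geq (\min\{a_1,a_2\}-\delta)\|u\|_{E_1}^2,
\]
since $a_1 \geq \min\{a_1,a_2\} \geq \min\{a_1,a_2\} - \delta$ (as $\delta>0$) and $a_1 - \delta \geq \min\{a_1,a_2\} - \delta$. The analogous estimate for the $v$-piece, followed by summation, yields exactly \eqref{gjj1}.

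There is no real obstacle: the proof is essentially a two-line computation. The only small care needed is to make sure the condition $\delta < \min\{a_1,a_2\}$ is used correctly, so that the final coefficient $\min\{a_1,a_2\}-\delta$ is positive and the inequality has content (this is precisely what makes the quadratic form $a_1\|u\|_{E_1}^2+a_2\|v\|_{E_2}^2 - 2\int\lambda uv$ coercive on $E$, which is the purpose of stating this lemma before tackling the Nehari manifold).
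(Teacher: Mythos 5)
Your proof is correct and follows essentially the same route as the paper: the pointwise bound $2|\lambda(x)||u||v|\leq\delta(V_1(x)u^2+V_2(x)v^2)$ from \ref{paper1A3} and Young's inequality, followed by absorption into the $E$-norm. The only difference is cosmetic: you track the gradient and potential parts of each norm separately before passing to the coefficient $\min\{a_1,a_2\}-\delta$, whereas the paper bounds $\delta\int(V_1u^2+V_2v^2)\leq\delta\|(u,v)\|_E^2$ in one step; both yield the identical estimate.
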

\begin{proof} Note that
$$
2\sqrt{V_{1  }(x)V_{2  }(x)}|u||v|\leq V_{1  }(x)u^{2} + V_{2  }(x) v^{2}.
$$
Thus, using \ref{paper1A3} we can deduce that
$$
-2 \displaystyle\int \lambda (x)uv \geq - \delta \displaystyle\int\bigl(V_{1  }(x)u^{2} + V_{2  }(x) v^{2}\bigl) \geq - \delta\|(u,v)\|^{2}_{E },
$$
which easily implies \eqref{gjj1}.
\end{proof}

The Nehari manifold associated to System~\eqref{paper1j0} is given by
$$
\mathcal{N} =\left\{(u,v)\in E \backslash\{(0,0)\}: I ^{\prime}(u,v)(u,v)=0\right\}.
$$
Notice that if $(u,v)\in\mathcal{N} $, then
\begin{align}\label{paper1j8}
a_{1}\|u\|_{E_{1}}^{2}+a_{2}\|v\|_{E_{2}}^{2}+\alpha^{\prime}(\|u\|_{E_{1}}^{2})\|u\|_{E_{1}}^{2}+\beta^{\prime}(\|v\|_{E_{2}}^{2})\|v\|_{E_{2}}^{2}-2\int\lambda (x)uv=\mu\|u\|_{p}^{p}+\|v\|_{q}^{q}.
\end{align}

\begin{lemma}\label{paper1nehari}
	Suppose that \ref{paper1A2}, \ref{paper1A3}, \ref{M_4} and \ref{M_5} hold. Then, there exists $\alpha>0$ such that
	\begin{equation}\label{paper1j26}
	\|(u,v)\|_{E }\geq\alpha, \quad \mbox{for all} \hspace{0,2cm} \ (u,v)\in\mathcal{N} .
	\end{equation}
	Moreover, $\mathcal{N} $ is a $C^{1}$-manifold.
\end{lemma}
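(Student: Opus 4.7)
The argument splits into two parts: first establish the uniform lower bound \eqref{paper1j26}, then use it to show that $0$ is a regular value of the constraint functional $G(u,v):=I'(u,v)(u,v)$.

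For the lower bound, I would start from the Nehari identity \eqref{paper1j8}. The two inequalities in \ref{M_4}, specialized to $s=0$ and $t=0$, force $\alpha',\beta'\geq 0$, so the Kirchhoff terms on the left-hand side of \eqref{paper1j8} are nonnegative and can be discarded. Lemma~\ref{lambda} then yields $(\min\{a_{1},a_{2}\}-\delta)\|(u,v)\|_{E}^{2}\leq \mu\|u\|_{p}^{p}+\|v\|_{q}^{q}$, and the continuous embeddings $E_{i}\hookrightarrow L^{r}(\mathbb{R}^{3})$ for $r\in[2,6]$ bound the right-hand side by $C(\|(u,v)\|_{E}^{p}+\|(u,v)\|_{E}^{q})$. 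Since $p,q>2$, dividing by $\|(u,v)\|_{E}^{2}$ produces \eqref{paper1j26}.

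For the $C^{1}$-manifold claim, write $\mathcal{N}=G^{-1}(0)\setminus\{(0,0)\}$ and show that $G'(u,v)(u,v)<0$ on $\mathcal{N}$. Direct differentiation at $(u,v)$ in the direction $(u,v)$ produces six terms; I would first substitute the Nehari identity \eqref{paper1j8} to eliminate $-4\int\lambda(x)uv$, then apply \ref{M_5} in the form $\alpha''(s)s^{2}\leq \alpha'(s)s$ (and analogously for $\beta$) to dominate the second-derivative contributions, and finally substitute \eqref{paper1j8} a second time to convert the remaining $\alpha'$- and $\beta'$-terms into $L^{p}$- and $L^{q}$-terms. The upshot is the bound
$$
G'(u,v)(u,v)\leq (4-p)\mu\|u\|_{p}^{p}+(4-q)\|v\|_{q}^{q}-2\Bigl(a_{1}\|u\|_{E_{1}}^{2}+a_{2}\|v\|_{E_{2}}^{2}-2\textstyle\int\lambda(x)uv\Bigr),
$$
which is strictly negative by Lemma~\ref{lambda}, the hypotheses $p,q>4$, and the lower bound \eqref{paper1j26}. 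The implicit function theorem then makes $\mathcal{N}$ a $C^{1}$-manifold.

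The main obstacle is the ordering of operations in the second step: \ref{M_5} must be applied \emph{after} the first Nehari substitution (otherwise the linear Kirchhoff terms cannot be cancelled), and a second Nehari substitution is then needed to convert $\alpha'(\|u\|_{E_1}^2)\|u\|_{E_1}^{2}$ and $\beta'(\|v\|_{E_2}^2)\|v\|_{E_2}^{2}$ into $L^{p}$- and $L^{q}$-terms so that the superlinearity $p,q>4$ can be exploited. The uniform lower bound from the first step is exactly what upgrades the resulting inequality from non-strict to strict, which is the information the implicit function theorem needs.
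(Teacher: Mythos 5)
Your argument is correct and follows essentially the same route as the paper: discard the nonnegative Kirchhoff terms in the Nehari identity and combine Lemma~\ref{lambda} with the Sobolev embeddings for the lower bound, then bound the derivative of the constraint functional by $-2\left(a_{1}\|u\|_{E_{1}}^{2}+a_{2}\|v\|_{E_{2}}^{2}-2\int\lambda(x)uv\right)+\mu(4-p)\|u\|_{p}^{p}+(4-q)\|v\|_{q}^{q}<0$ using \ref{M_5} and the relation \eqref{paper1j8}. The only cosmetic differences are that the paper applies \ref{M_5} \emph{before} a single Nehari substitution (so the ordering you describe is not actually forced), and that strict negativity already follows from $(u,v)\neq(0,0)$ without invoking \eqref{paper1j26}.
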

\begin{proof}
	If $(u,v)\in\mathcal{N} $, then using Lemma~\ref{lambda}, \eqref{paper1j8} and Sobolev embedding we deduce that
	\begin{eqnarray*}
		\left(\min\{a_{1},a_{2}\}-\delta\right)\|(u,v)\|_{E }^{2} & \leq & a_{1}\|u\|_{E_{1}}^{2}+a_{2}\|v\|_{E_{2}}^{2}-2\int\lambda (x)uv\\
		                                            & \leq & \mu\|u\|_{p}^{p}+\|v\|_{q}^{q}\\
		                                            &\leq & C\left(\|(u,v)\|_{E }^{p}+\|(u,v)\|_{E }^{q}\right).
	\end{eqnarray*}
	Hence, we have that
	$$
	0<\frac{\min\{a_{1},a_{2}\}-\delta}{C}\leq\|(u,v)\|_{E }^{p-2}+\|(u,v)\|_{E }^{q-2},
	$$
	which implies \eqref{paper1j26}. In order to prove that $\mathcal{N}$ is a $C^{1}$-manifold, let $ J :E \backslash\{(0,0)\}\rightarrow\mathbb{R}$ be the $C^{1}$-functional given by $J (u,v)=I ^{\prime}(u,v)(u,v)$. Notice that $\mathcal{N} = J ^{-1}(0)$ and 
	 \begin{align*}
	  J ^{\prime}(u,v)(u,v)=2\left(a_{1}\|u\|_{E_{1}}^{2}+a_{2}\|v\|_{E_{2}}^{2}-2\int\lambda (x)uv\right)+2\alpha^{\prime\prime}(\|u\|_{E_{1}}^{2})\|u\|_{E_{1}}^{2}+2\alpha^{\prime}(\|u\|_{E_{1}}^{2})\|u\|_{E_{1}}^{2}+\\+2\beta^{\prime\prime}(\|v\|_{E_{2}}^{2})\|v\|_{E_{2}}^{2}+2\beta^{\prime}(\|v\|_{E_{2}}^{2})\|v\|_{E_{2}}^{2}-\mu p\|u\|_{p}^{p}-q\|v\|_{q}^{q},
	 \end{align*}
	which together with assumption \ref{M_5} implies that
	 \begin{align*}
	 J ^{\prime}(u,v)(u,v)\leq 2\left(a_{1}\|u\|_{E_{1}}^{2}+a_{2}\|v\|_{E_{2}}^{2}-2\int\lambda (x)uv\right)+4\alpha^{\prime}(\|u\|_{E_{1}}^{2})\|u\|_{E_{1}}^{2}+\\+4\beta^{\prime}(\|v\|_{E_{2}}^{2})\|v\|_{E_{2}}^{2}-\mu p\|u\|_{p}^{p}-q\|v\|_{q}^{q}.
	 \end{align*}
	Since $(u,v)\in\mathcal{N} $, we can conclude that
	 \begin{eqnarray*}
	 J ^{\prime}(u,v)(u,v) & \leq & -2\left(a_{1}\|u\|_{E_{1}}^{2}+a_{2}\|v\|_{E_{2}}^{2}-2\int\lambda (x)uv\right)+\mu(4-p)\|u\|_{p}^{p}+(4-q)\|v\|_{q}^{q}\\
	              & \leq & -2\left(\min\{a_{1},a_{2}\}-\delta\right)\|(u,v)\|^{2}_{E }+\mu(4-p)\|u\|_{p}^{p}+(4-q)\|v\|_{q}^{q}<0,
	 \end{eqnarray*}
	where we have used Lemma \ref{lambda} and the fact that $4<p\leq q$. Therefore, $0$ is a regular value of $ J $ and $\mathcal{N} $ is a $C^{1}$-manifold.
\end{proof}

\begin{remark}
	If $(u_{0},v_{0})\in\mathcal{N} $ is a critical point of the constrained functional $I\mid_{\mathcal{N}}$, then $I'(u_{0},v_{0})=0$. In fact, notice that
	$
	I'(u_{0},v_{0})=\eta J'(u_{0},v_{0}),
	$
	where $\eta\in\mathbb{R}$ is the corresponding Lagrange multiplier. Taking the scalar product with $(u_{0},v_{0})$ we conclude that $\eta=0$.
\end{remark}

\begin{lemma}\label{paper1p1}
	Assume \ref{paper1A2}, \ref{paper1A3}, \ref{M_3} and \ref{M_4} hold. Then, for any $(u,v)\in E \backslash\{(0,0)\}$ there exists a unique $t_{0}>0$, depending only on $(u,v)$, such that 
	$$
	(t_{0}u,t_{0}v)\in\mathcal{N}  \quad \mbox{and} \quad I (t_{0}u,t_{0}v)=\max_{t\geq0} I (tu,tv).
	$$
\end{lemma}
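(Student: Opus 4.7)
The plan is to analyze the fibering map $\varphi(t):=I(tu,tv)$ on $[0,\infty)$, whose critical points at $t>0$ are precisely the values of $t$ with $(tu,tv)\in\mathcal{N}$. A direct computation yields
\[
\varphi'(t)=t\!\left(a_{1}\|u\|_{E_{1}}^{2}+a_{2}\|v\|_{E_{2}}^{2}-2\!\int\lambda(x)uv\right)+t\bigl(\alpha'(t^{2}\|u\|_{E_{1}}^{2})\|u\|_{E_{1}}^{2}+\beta'(t^{2}\|v\|_{E_{2}}^{2})\|v\|_{E_{2}}^{2}\bigr)-\mu t^{p-1}\|u\|_{p}^{p}-t^{q-1}\|v\|_{q}^{q}.
\]

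The key idea is to divide by $t^{3}$ and show that $h(t):=\varphi'(t)/t^{3}$ is strictly decreasing on $(0,\infty)$. Writing
\[
h(t)=\frac{C}{t^{2}}+\|u\|_{E_{1}}^{4}\,\frac{\alpha'(t^{2}\|u\|_{E_{1}}^{2})}{t^{2}\|u\|_{E_{1}}^{2}}+\|v\|_{E_{2}}^{4}\,\frac{\beta'(t^{2}\|v\|_{E_{2}}^{2})}{t^{2}\|v\|_{E_{2}}^{2}}-\mu t^{p-4}\|u\|_{p}^{p}-t^{q-4}\|v\|_{q}^{q},
\]
with $C:=a_{1}\|u\|_{E_{1}}^{2}+a_{2}\|v\|_{E_{2}}^{2}-2\!\int\lambda uv$, I would observe: (i) Lemma~\ref{lambda} gives $C\geq(\min\{a_{1},a_{2}\}-\delta)\|(u,v)\|_{E}^{2}>0$, so $C/t^{2}$ is strictly decreasing; (ii) since $s\mapsto\alpha'(s)/s$ is non-increasing by \ref{M_3} and $t\mapsto t^{2}\|u\|_{E_{1}}^{2}$ is strictly increasing (when $u\neq 0$), the Kirchhoff terms are non-increasing in $t$, with the convention that we interpret them as $0$ when the corresponding component vanishes; (iii) because $p,q>4$, the monomials $-\mu t^{p-4}\|u\|_{p}^{p}$ and $-t^{q-4}\|v\|_{q}^{q}$ are non-increasing. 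Summing, $h$ is strictly decreasing on $(0,\infty)$.

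Next I would check the endpoint behaviour. As $t\to 0^{+}$, $C/t^{2}\to+\infty$ while the Kirchhoff pieces stay bounded, since the growth bound $\alpha'(s)\leq b_{1}s$ from \ref{M_4} yields $\alpha'(t^{2}\|u\|_{E_{1}}^{2})\|u\|_{E_{1}}^{2}/t^{2}\leq b_{1}\|u\|_{E_{1}}^{4}$; consequently $h(t)\to+\infty$. As $t\to\infty$, the first three pieces remain bounded by the same estimate, while $p,q>4$ forces $-\mu t^{p-4}\|u\|_{p}^{p}-t^{q-4}\|v\|_{q}^{q}\to-\infty$ (the degenerate case $\mu\|u\|_{p}^{p}+\|v\|_{q}^{q}=0$ is incompatible with $(tu,tv)\in\mathcal{N}$ by the Nehari identity \eqref{paper1j8} combined with Lemma~\ref{lambda}, so it need not be considered). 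Intermediate-value together with strict monotonicity deliver a unique $t_{0}>0$ with $h(t_{0})=0$; equivalently $\varphi'(t_{0})=0$, i.e.\ $(t_{0}u,t_{0}v)\in\mathcal{N}$. The sign change $\varphi'>0$ on $(0,t_{0})$ and $\varphi'<0$ on $(t_{0},\infty)$ then shows that $t_{0}$ is the unique global maximizer of $\varphi$ on $[0,\infty)$.

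The main obstacle is the Kirchhoff contribution, which is neither polynomial nor obviously monotone. The argument hinges on rewriting $\alpha'(t^{2}\|u\|_{E_{1}}^{2})\|u\|_{E_{1}}^{2}/t^{2}$ in the form $\|u\|_{E_{1}}^{4}\cdot\alpha'(s)/s$ so that \ref{M_3} applies, and on the growth bound from \ref{M_4} to guarantee that the $t^{p-4}$ and $t^{q-4}$ terms dominate at infinity; this is precisely where the restriction $p,q>4$ is essential.
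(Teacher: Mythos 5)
Your argument is correct and is essentially the paper's own proof in slightly different packaging: the paper also works with the fiber map $g(t)=I(tu,tv)$, checks the endpoint behaviour via Lemma~\ref{lambda}, Sobolev embedding and the bound $\alpha'(s)\le b_1 s$ from \ref{M_4}, and obtains uniqueness by dividing the critical-point equation by $t^{4}$ and invoking \ref{M_3} to see that the left-hand side is decreasing while $\mu t^{p-4}\|u\|_p^p+t^{q-4}\|v\|_q^q$ is increasing --- which is exactly your statement that $h(t)=\varphi'(t)/t^{3}$ is strictly decreasing. The only point worth noting is that your ``degenerate case'' $\mu\|u\|_p^p+\|v\|_q^q=0$ can actually occur when $\mu=0$ and $v\equiv 0$ (the theorems allow $\mu\ge 0$), in which case no multiple of $(u,v)$ lies on $\mathcal{N}$; the paper's proof silently has the same limitation, so this does not distinguish your argument from theirs.
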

\begin{proof}
	Let $(u,v)\in E \backslash\{(0,0)\}$ be fixed and consider the fiber map $g:[0,\infty)\rightarrow\mathbb{R}$ defined by $g(t)= I (tu,tv)$. Notice that
	$
	\langle I ^{\prime}(tu,tv)(tu,tv)\rangle=tg'(t).
	$
	Therefore, $t_{0}$ is a positive critical point of $g$ if and only if $(t_{0}u,t_{0}v)\in\mathcal{N} $. 
	Note that
	$$
	 g(t)= \frac{t^{2}}{2}\left(a_{1}\|u\|_{E_{1}}^{2}+a_{2}\|v\|_{E_{2}}^{2}-2\int\lambda (x)uv\right)+\frac{1}{2}(\alpha(\|tu\|_{E_{1}}^{2})+\beta(\|tv\|_{E_{2}}^{2}))-\mu\frac{t^{p}}{p}\|u\|_{p}^{p}-
	\frac{t^{q}}{q}\|v\|_{q}^{q}.
	$$
	By using Lemma \ref{lambda} and Sobolev embeddings, we have that
	$$
	g(t) \geq (\min\{a_{1},a_{2}\}-\delta)\frac{t^{2}}{2}\|(u,v)\|_{E }^{2}-C_{1}\mu\frac{t^{p}}{p}\|(u,v)\|_{E }^{p}-C_{2}\frac{t^{q}}{q}\|(u,v)\|_{E }^{q}\>>0,
	$$
	provided $t>0$ is sufficiently small. On the other hand, by using \ref{M_4} we deduce that
	 \begin{align*}
	  g(t)\leq \frac{t^{2}}{2}\left(a_{1}\|u\|_{E_{1}}^{2}+a_{2}\|v\|_{E_{2}}^{2}-2\int\lambda (x)uv\right)+b_{1}\frac{t^{4}}{2}\|u\|_{E_{1  }}^{2}+b_{2}\frac{t^{4}}{2}\|v\|_{E_{2  }}^{2}-\mu\frac{t^{p}}{p}\|u\|_{p}^{p}-\frac{t^{q}}{q}\|v\|_{q}^{q}.
	 \end{align*}
	Since $4<p\leq q$ we conclude that $g(t)<0$ for $t>0$ sufficiently large. Thus $g$ has maximum points in $(0,\infty)$. It remains to prove that the critical point is unique. In fact, notice that if $g'(\bar{t})=0$ then
	\begin{align*}	
	 \frac{1}{\bar{t}^{2}}\left(a_{1}\|u\|_{E_{1}}^{2}+a_{2}\|v\|_{E_{2}}^{2}-2\int\lambda (x)uv\right)+\frac{\alpha^{\prime}(\|\bar{t}u\|_{E_{1}}^{2})}{\|\bar{t}u\|_{E_{1}}^{2}}\|u\|_{E_{1}}^{4}+\frac{\beta^{\prime}(\|\bar{t}v\|_{E_{2}}^{2})}{\|\bar{t}v\|_{E_{2}}^{2}}\|v\|_{E_{2}}^{4}=\mu \bar{t}^{p-4}\|u\|_{p}^{p}+\bar{t}^{q-4}\|v\|_{q}^{q}.
	\end{align*}
	It follows from \ref{M_3} that the left-hand side is decreasing on $\bar{t}>0$. Since the right-hand side is increasing on $\bar{t}>0$, the maximum point of $g$ is unique.
\end{proof}

\begin{remark}
	Let us define the following energy levels associated to System~\eqref{paper1j0}:
	$$
	c_{\mathcal{N} }=\inf_{(u,v)\in\mathcal{N} } I (u,v), \quad c^{*}=\displaystyle\inf_{(u,v)\in E \backslash\{(0,0)\}}\max_{t\geq0} I (tu,tv), \quad  c=\displaystyle\inf_{\gamma\in\Gamma}\max_{t\in[0,1]} I (\gamma(t)),
	$$
	where
	$
	\Gamma=\left\{\gamma\in C([0,1],E ):\gamma(0)=(0,0),  I (\gamma(1))<0\right\}.
	$ By a similar argument used in \cite[Theorem 4.2]{will} we can deduce that
	$
	0<c_{\mathcal{N} }=c^{*}=c.
	$
\end{remark}

%%%%%%%%%%%%%%%%%%%%%%%%%%%%%%%%%%%%%%%%%%%%%%%%%%%%%%%%%%%%%%%%%%%%%%%%%%%%%%%%%%%%%%%%%%%%%%%%%%%%%%%%%%%%%%%%%%%%%%%%%%%%%%%%%%%%%%%%%%%%%%%%%%%%%%%%%%%%%%%%%%%%%%%%%%%%%%%%%%%%%%%
%                                                                     PROOF OF THEOREM A
%%%%%%%%%%%%%%%%%%%%%%%%%%%%%%%%%%%%%%%%%%%%%%%%%%%%%%%%%%%%%%%%%%%%%%%%%%%%%%%%%%%%%%%%%%%%%%%%%%%%%%%%%%%%%%%%%%%%%%%%%%%%%%%%%%%%%%%%%%%%%%%%%%%%%%%%%%%%%%%%%%%%%%%%%%%%%%%%%%%%%%%

\section{The subcritical case}\label{s4}

In this section we are concerned to prove existence of ground states for the subcritical System~\eqref{paper1j0}. For this purpose, we follow some ideas from \cite[Theorem~2.5]{alvesg}. Let $(u_{n},v_{n})\subset\mathcal{N} $ be a minimizing sequence for $c_{\mathcal{N} }$, that is
\begin{equation}\label{paper1j6}
I (u_{n},v_{n})\rightarrow c_{\mathcal{N} } \quad \mbox{and} \quad  I ^{\prime}(u_{n},v_{n})\rightarrow0.
\end{equation}

\begin{proposition}
	If \ref{paper1A2}, \ref{paper1A3} and \ref{M_4} hold, then the minimizing sequence $(u_{n},v_{n})$ is bounded in $E $.
\end{proposition}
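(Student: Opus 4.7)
The plan is to combine the Nehari identity with the energy $I(u_n,v_n)$ in the standard way. Since $(u_n,v_n)\in\mathcal{N}$, we have $I'(u_n,v_n)(u_n,v_n)=0$, so for any $\theta>0$,
$$I(u_n,v_n)=I(u_n,v_n)-\frac{1}{\theta}I'(u_n,v_n)(u_n,v_n).$$
I would choose $\theta=4$, because $4$ is simultaneously less than both $p$ and $q$ (ensuring the $L^p$ and $L^q$ terms are discardable) and is precisely the exponent that matches the quadratic behavior of the typical Kirchhoff term $\alpha(s)=\frac{b_1}{2}s^2$.

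Expanding $I(u_n,v_n)-\frac{1}{4}I'(u_n,v_n)(u_n,v_n)$ would regroup the terms as follows:
\begin{align*}
I(u_n,v_n) &= \frac{1}{4}\Bigl(a_1\|u_n\|_{E_1}^2+a_2\|v_n\|_{E_2}^2-2\int\lambda(x)u_nv_n\Bigr)\\
&\quad + \frac{1}{2}\Bigl(\alpha(\|u_n\|_{E_1}^2)-\tfrac{1}{2}\alpha'(\|u_n\|_{E_1}^2)\|u_n\|_{E_1}^2\Bigr)+\frac{1}{2}\Bigl(\beta(\|v_n\|_{E_2}^2)-\tfrac{1}{2}\beta'(\|v_n\|_{E_2}^2)\|v_n\|_{E_2}^2\Bigr)\\
&\quad +\mu\Bigl(\frac{1}{4}-\frac{1}{p}\Bigr)\|u_n\|_p^p+\Bigl(\frac{1}{4}-\frac{1}{q}\Bigr)\|v_n\|_q^q.
\end{align*}
The hypothesis \ref{M_4} (namely $\frac{1}{2}\alpha'(s)s+\frac{1}{2}\beta'(t)t\leq\alpha(s)+\beta(t)$) makes the two Kirchhoff brackets nonnegative; the condition $4<p\leq q$ makes the two integral brackets nonnegative; and Lemma~\ref{lambda} handles the first bracket, giving
$$\frac{1}{4}\Bigl(a_1\|u_n\|_{E_1}^2+a_2\|v_n\|_{E_2}^2-2\int\lambda(x)u_nv_n\Bigr)\geq\frac{\min\{a_1,a_2\}-\delta}{4}\|(u_n,v_n)\|_E^2,$$
which is positive by \ref{paper1A3}.

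Stringing these together, I obtain
$$c_{\mathcal{N}}+o_n(1)=I(u_n,v_n)\geq\frac{\min\{a_1,a_2\}-\delta}{4}\|(u_n,v_n)\|_E^2,$$
from which boundedness follows immediately. I expect no genuine obstacle here: the only subtlety is the bookkeeping — making sure every discarded term really has the right sign — and that is entirely controlled by the structural assumptions \ref{paper1A3} and \ref{M_4} together with $p,q>4$. Notice that \ref{paper1A2} is only used implicitly, in that $\|\cdot\|_{E_i}$ is a genuine norm (guaranteeing the Hilbert-space structure on $E$) so that the left-hand side of Lemma~\ref{lambda} can be bounded below by the full $E$-norm.
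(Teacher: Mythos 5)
Your argument is correct and is essentially the paper's own proof: both combine $I(u_n,v_n)$ with the Nehari identity $I'(u_n,v_n)(u_n,v_n)=0$, discard the $L^p$, $L^q$ and Kirchhoff contributions using $4<p\leq q$ and the inequality $\tfrac12\alpha'(s)s+\tfrac12\beta'(t)t\leq\alpha(s)+\beta(t)$ from \ref{M_4}, and bound the remaining quadratic part from below via Lemma~\ref{lambda}. The only cosmetic difference is the choice of multiplier (you subtract $\tfrac14 I'$, yielding the constant $\tfrac14(\min\{a_1,a_2\}-\delta)$, while the paper's substitution yields $(\tfrac12-\tfrac1p)(\min\{a_1,a_2\}-\delta)$), and note that \ref{M_4} strictly gives nonnegativity of the \emph{sum} of the two Kirchhoff brackets rather than of each one separately, which is all that is needed.
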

\begin{proof}
	In fact, recalling that we are assuming $4<p\leq q$, it follows from \eqref{paper1j8} that
	 \begin{align*}
	  I (u_{n},v_{n}) \geq \left(\frac{1}{2}-\frac{1}{p}\right)\left(a_{1}\|u\|_{E_{1}}^{2}+a_{2}\|v\|_{E_{2}}^{2}-2\int\lambda (x)uv\right)+\left(\frac{1}{p}-\frac{1}{q}\right)\|v_{n}\|_{q}^{q}+\\+\frac{1}{2}(\alpha(\|u_{n}\|_{E_{1}}^{2})+\beta(\|v_{n}\|_{E_{2}}^{2}))+\frac{1}{4}(\alpha^{\prime}(\|u_{n}\|_{E_{1}}^{2})\|u_{n}\|_{E_{1}}^{2}+\beta^{\prime}(\|v_{n}\|_{E_{2}}^{2})\|v_{n}\|_{E_{2}}^{2}),
	 \end{align*}
	which together with assumption \ref{M_4} and Lemma~\ref{lambda} implies that
	 $$
	  I (u_{n},v_{n}) \geq \left(\frac{1}{2}-\frac{1}{p}\right)\left(\min\{a_{1},a_{2}\}-\delta\right)\|(u_{n},v_{n})\|^{2}_{E }.
	 $$
	Since $ I (u_{n},v_{n})$ is bounded, we conclude that $(u_{n},v_{n})$ is bounded in $E $.	
\end{proof}

By the preceding Proposition we may assume that, up to a subsequence, we have
	\begin{equation}\label{conv}
	 \left\{
	  \begin{array}{ll}
		(u_{n},v_n)\rightharpoonup (u_0,v_0), \mbox{ weakly in } E ; & \\
		\|u_{n}\|_{E_{1  }}\rightarrow\varrho_{0} \mbox{ and } \|v_n\|_{E_{2  }} \rightarrow \varrho_1, \mbox{ strongly in } \mathbb{R}; & \\			
		(u_{n},v_n) \rightarrow (u_0,v_0), \mbox{ strongly in } L^{t}_{loc}(\mathbb{R}^{3})\times L^{t}_{loc}(\mathbb{R}^{3}) \mbox{ for all } t \in (2,6); &\\
		(u_{n}(x),v_n(x))\rightarrow (u_0(x),v_0(x)), \mbox{ almost everywhere in } \mathbb{R}^{3}. &
	  \end{array}
	 \right.
	\end{equation}
Without loss of generality, we can assume that $(u_0,v_0)\neq (0,0)$. In fact, by using a standard argument we can use the result due to Lions \cite[Lemma 1.21]{will} (see also \cite{Lio}) to prove that there exist $\eta>0$ and $(y_{n})\subset \mathbb{R}^{3}$ such that
	\begin{equation}\label{paper1gjj1}
	 \liminf_{n\to\infty} \int_{B_R(y_n)} (u_n^2 +v_n^{2})\geq \eta > 0.
	\end{equation}
A direct computation shows that we can assume $(y_{n}) \subset \mathbb{Z}^{3}$. Let us define the shift sequence
	$$
	 (\tilde{u}_{n}(x),\tilde{v}_{n}(x))=(u_{n}(x+y_{n}),v_{n}(x+y_{n})).
	$$
In view of assumption \ref{paper1A1} one can see that $(\tilde{u}_{n},\tilde{v}_{n})$ is a Palais-Smale sequence of $I$ at level $c_{\mathcal{N} }$, that is, also satisfies \eqref{paper1j6}. Moreover, we have that $(\tilde{u}_{n},\tilde{v}_{n})$ is also bounded in $E $ and its weak limit denoted by $(\tilde{u}_{0},\tilde{v}_{0})$ is nontrivial, because \eqref{paper1gjj1} and the local convergence imply that
	$$
	 \int_{B_R(0)} (\tilde{u}_{0}^{2} + \tilde{v}_{0}^{2}) \geq \eta > 0.
	$$ 
Therefore, we can assume $(u_{0},v_{0})\neq(0,0)$.
	
\begin{proposition}\label{p2}
	Suppose that \ref{paper1A2}, \ref{paper1A3}, \ref{M_1}--\ref{M_5} hold. Then, the weak limit $(u_{0},v_{0})$ is a critical point of $I$.
\end{proposition}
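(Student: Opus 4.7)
The plan is to show that for any $(\phi,\psi)\in E$ we have $I'(u_0,v_0)(\phi,\psi)=0$, by passing to the limit in the Palais--Smale condition $I'(u_n,v_n)(\phi,\psi)=o(1)$. The main subtlety is that the Kirchhoff coefficients $\alpha'(\|u_n\|_{E_1}^2)$ and $\beta'(\|v_n\|_{E_2}^2)$ depend on norms that converge to $\varrho_0^2$ and $\varrho_1^2$ (see \eqref{conv}), quantities which may a priori exceed $\|u_0\|_{E_1}^2$ and $\|v_0\|_{E_2}^2$; the heart of the proof is to reconcile this.

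For the passage to the limit, I would first pick $(\phi,\psi)\in C_c^\infty\times C_c^\infty$ and handle each term of $I'(u_n,v_n)(\phi,\psi)$ individually: the linear Sobolev inner products pass by weak convergence in $E$; the Kirchhoff factors pass by continuity of $\alpha',\beta'$ together with $\|u_n\|_{E_1}\to\varrho_0$ and $\|v_n\|_{E_2}\to\varrho_1$; the coupling terms pass because $u\mapsto\int\lambda u\psi$ is a bounded linear functional on $E_1$ by \ref{paper1A3} and Cauchy--Schwarz; and the nonlinear terms $\int|u_n|^{p-2}u_n\phi$ pass via the local compact embedding $E_i\hookrightarrow L^t_{\mathrm{loc}}$ for $t\in[2,6)$ applied on $\supp\phi$. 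Extending by density (since $|u_n|^{p-2}u_n$ is bounded in $L^{p/(p-1)}$ with pointwise a.e.\ limit $|u_0|^{p-2}u_0$), one obtains for every $(\phi,\psi)\in E$
\begin{multline*}
(a_1+\alpha'(\varrho_0^2))\langle u_0,\phi\rangle_{E_1}+(a_2+\beta'(\varrho_1^2))\langle v_0,\psi\rangle_{E_2}\\
=\mu\int|u_0|^{p-2}u_0\phi+\int|v_0|^{q-2}v_0\psi+\int\lambda(u_0\psi+v_0\phi).
\end{multline*}

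The crux is to show that this identity coincides with $I'(u_0,v_0)(\phi,\psi)=0$, i.e.\ that $\alpha'(\varrho_0^2)=\alpha'(\|u_0\|_{E_1}^2)$ and $\beta'(\varrho_1^2)=\beta'(\|v_0\|_{E_2}^2)$. Testing with $(\phi,\psi)=(u_0,v_0)$ and subtracting the definition of $I'(u_0,v_0)(u_0,v_0)$ yields
\[
I'(u_0,v_0)(u_0,v_0)=\bigl[\alpha'(\|u_0\|_{E_1}^2)-\alpha'(\varrho_0^2)\bigr]\|u_0\|_{E_1}^2+\bigl[\beta'(\|v_0\|_{E_2}^2)-\beta'(\varrho_1^2)\bigr]\|v_0\|_{E_2}^2\leq0,
\]
by \ref{M_1} and $\|u_0\|_{E_1}\leq\varrho_0$, $\|v_0\|_{E_2}\leq\varrho_1$. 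Applying Lemma~\ref{paper1p1} to $(u_0,v_0)\neq(0,0)$ produces a unique $t_0>0$ with $(t_0u_0,t_0v_0)\in\mathcal N$; writing $g(t)=I(tu_0,tv_0)$ and using $tg'(t)=I'(tu_0,tv_0)(tu_0,tv_0)$, the inequality $g'(1)\leq 0$ forces $t_0\leq 1$. The main technical obstacle is to close this into equalities: I would introduce the auxiliary functional $\Phi(u,v):=I(u,v)-\tfrac{1}{p}I'(u,v)(u,v)$, which equals $I$ on $\mathcal N$, and using \ref{M_3}, \ref{M_4} together with $p>4$ decompose $\Phi$ into three manifestly non-negative pieces (the quadratic form bounded via Lemma~\ref{lambda}, the Kirchhoff piece $f(s)=\tfrac{1}{2}\alpha(s)-\tfrac{1}{p}\alpha'(s)s$ which is non-decreasing, and $(\tfrac{1}{p}-\tfrac{1}{q})\|v\|_q^q$), each satisfying a Brezis--Lieb-type weak lower semicontinuity. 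Hence $\Phi(u_0,v_0)\leq\liminf\Phi(u_n,v_n)=c_{\mathcal N}$, and monotonicity of $t\mapsto\Phi(tu_0,tv_0)$ together with $t_0\leq1$ collapses the chain $c_{\mathcal N}\leq\Phi(t_0u_0,t_0v_0)\leq\Phi(u_0,v_0)\leq c_{\mathcal N}$ to equalities. Equality in the Kirchhoff piece, combined with $\|u_n\|_{E_1}^2\to\varrho_0^2$ and the monotonicity of $f$, gives $f(\varrho_0^2)=f(\|u_0\|_{E_1}^2)$ and analogously for $\beta$; this forces $\alpha'(\varrho_0^2)=\alpha'(\|u_0\|_{E_1}^2)$ and $\beta'(\varrho_1^2)=\beta'(\|v_0\|_{E_2}^2)$, turning the limit identity into $I'(u_0,v_0)(\phi,\psi)=0$ for every $(\phi,\psi)\in E$.
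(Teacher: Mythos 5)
Your proposal is correct and follows essentially the same route as the paper: pass to the limit to obtain the system with frozen coefficients $\alpha'(\varrho_0^2),\beta'(\varrho_1^2)$, observe via weak lower semicontinuity of the norms and \ref{M_1} that $I'(u_0,v_0)(u_0,v_0)\le 0$, project onto $\mathcal{N}$ at some $\bar t\le 1$, and squeeze $c_{\mathcal{N}}$ using the $t$-monotonicity of $I-cI'(\cdot)(\cdot)$ together with Fatou and weak lower semicontinuity. The only differences are cosmetic: the paper argues by contradiction with $c=\tfrac14$ and the auxiliary function $\mathcal{H}$, whereas you argue directly with $c=\tfrac1p$ and extract the coefficient equality at the end from equality in the Kirchhoff piece.
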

\begin{proof}
	By using \eqref{conv} and the fact that $\alpha^{\prime}$ and $\beta^{\prime}$ are continuous, we have the convergences
	 $$
	  \alpha^{\prime}(\|u_{n}\|_{E_{1}}^{2})\rightarrow\alpha^{\prime}(\varrho_{0}^{2}) \quad \mbox{and} \quad \beta^{\prime}(\|v_{n}\|_{E_{2}}^{2})\rightarrow\beta^{\prime}(\varrho_{1}^{2}).
	 $$
	Since $I'(u_n,v_n)=o_{n}(1)$, we conclude that $(u_0,v_0)$ is a nontrivial solution of the following system
	 \begin{equation}\label{p1}
	  \left\{
	   \begin{array}{lr}
	    \left(a_{1}+\alpha^{\prime}(\varrho_{0}^{2})\right)(-\Delta u_{0}+V_{1  }(x)u_{0})= \mu|u_0|^{p-2}u_0 + \lambda  (x)v_0,  & x\in\mathbb{R}^{3},\\
	     \left(a_{2}+\beta^{\prime}(\varrho_{1}^{2})\right)(-\Delta v_{0}+V_{2  }(x)v_{0})= |v_0|^{q-2}v_0 +\lambda  (x)u_0, 
	       & x\in\mathbb{R}^{3}.
	    \end{array}
	   \right. 
	 \end{equation}
	To conclude our proof, it suffices to prove that $\alpha^{\prime}(\|u_{0}\|_{E_{1}}^{2})=\alpha^{\prime}(\varrho_{0}^{2})$ and $\beta^{\prime}(\|v_{0}\|_{E_{2}}^{2})=\beta^{\prime}(\varrho_{1}^{2})$. By the lower semicontinuity of the norm we have
	$$
	\displaystyle\liminf_{n\rightarrow \infty}\|u_{n}\|_{E_{1  }} \geq \|u_0\|_{E_{1  }}.
	$$
   Consequently, given $\epsilon>0$, there exists $n_{0}\in\mathbb{N}$ such that
	$$
	\|u_{n}\|_{E_{1  }} \geq\|u_0\|_{E_{1  }}- \epsilon, \quad \mbox{for all} \hspace{0,2cm} n \geq n_{0}.
	$$
   Arguing by the same argument we get
	$$
	 \|v_{n}\|_{E_{2  }} \geq\|v_0\|_{E_{2  }}- \epsilon, \quad \mbox{for all} \hspace{0,2cm} n \geq n_{0}.
	$$
   Thus, for $n\geq n_{0}$, it follows from \ref{M_1} that
	$$
	 \alpha^{\prime}(\|u_{n}\|_{E_{1}}^{2})\geq \alpha^{\prime}(\|u_0\|^{2}_{E_{1  }}- \epsilon) \quad \mbox{and} \quad \beta^{\prime}(\|v_{n}\|_{E_{2}}^{2})\geq \beta^{\prime}(\|v_0\|^{2}_{E_{2  }}- \epsilon)
	$$
   Letting $n \to \infty$, and after $\epsilon \to 0$, we obtain
	$$
	 \alpha^{\prime}(\varrho_{0}^{2})\geq \alpha^{\prime}(\|u_0\|^{2}_{E_{1  }}) \quad \mbox{and} \quad \beta^{\prime}(\varrho_{1}^{2})\geq \beta^{\prime}(\|v_0\|^{2}_{E_{2  }})
	$$
 Let us suppose by contradiction that at least one of the preceding estimates is strictly. Without loss of generality, let us assume that $\alpha^{\prime}(\varrho_{0}^{2})> \alpha^{\prime}(\|u_0\|^{2}_{E_{1  }})$. Let $\mathcal{H}:(0,+\infty)\rightarrow\mathbb{R}$ be defined by
 \[
 \mathcal{H}(t):=\frac{1}{2}(\alpha(\|tu_{0}\|_{E_{1}}^{2})+\beta(\|tv_0\|_{E_{2}}^{2}))-\frac{1}{4}(\alpha^{\prime}(\|tu_{0}\|_{E_{1}}^{2})\|tu_{0}\|_{E_{1}}^{2}+\beta^{\prime}(\|tv_{0}\|_{E_{2}}^{2})\|tv_{0}\|_{E_{2}}^{2}).
 \]
 By using \ref{M_5} one can see that $\mathcal{H}(t)$ is increasing on $t>0$. Since $(u_{0},v_{0})$ is a weak solution for the problem \eqref{p1} we can deduce that $I'(u_0,v_0)(u_0,v_0)<0$. Moreover, we have that $I'(t_{0}u_0,t_{0}v_0)(t_{0}u_0,t_{0}v_0)>0$ for some $t\in(0,1)$. Therefore, there exists $\overline{t} \in (0,1)$ such that $\overline{t}(u_0,v_0) \in \mathcal{N} $. Thus, it follows that
%	\begin{align*}
%	 M_{1,u}(\|u_0\|_{E_{1  }}^{2},\|v_0\|_{E_{2  }}^{2})\|u_0\|^{2}_{E_1}+M_{1,v}(\|u_0\|_{E_{1  }}^{2},\|v_0\|_{E_{2  }}^{2})\|v_0\|^{2}_{E_2}+M_{2,u}(\|u_0\|_{E_{1  }}^{2},\|v_0\|_{E_{2  }}^{2})\|u_0\|^{2}_{E_1}+\\+M_{2,v}(\|u_0\|_{E_{1  }}^{2},\|v_0\|_{E_{2  }}^{2})\|v_0\|^{2}_{E_2} -2\displaystyle\int\lambda (x)u_0v_0 -\mu \|u_0\|^{p}_{p} - \|v_0\|^{q}_{q}
%	< M_{1,u}(\varrho^{2}_{0},\varrho^{2}_{1})\varrho^{2}_{0}+\\+M_{1,v}(\varrho^{2}_{0},\varrho^{2}_{1})\varrho^{2}_{1} + +M_{2,u}(\varrho^{2}_{0},\varrho^{2}_{1})\varrho^{2}_{0}+M_{2,v}(\varrho^{2}_{0},\varrho^{2}_{1})\varrho^{2}_{0}-2\displaystyle\int\lambda (x)u_0v_0 -\mu \|u_0\|^{p}_{p} - \|v_0\|^{q}_{q}.
%	\end{align*}
 	\begin{eqnarray*}
		c_{\mathcal{N} } &\leq & I(\overline{t}u_0,\overline{t}v_0)- \displaystyle\frac{1}{4} I'(\overline{t}u_0,\overline{t}v_0)(\overline{t}u_0,\overline{t}v_0)\\
		& = & \frac{\overline{t}^{2}}{4}\left(a_{1}\|u_{0}\|_{E_{1}}^{2}+a_{2}\|v_{0}\|_{E_{2}}^{2}-2\int\lambda (x)u_{0}v_{0}\right)+\mathcal{H}(\overline{t})+\overline{t}^{p}\left(\frac{1}{4}-\frac{1}{p}\right)\mu\|u_{0}\|_{p}^{p}+\overline{t}^{q}\left(\frac{1}{4}-\frac{1}{q}\right)\|v_{0}\|_{q}^{q}.
	\end{eqnarray*}
	Since the right-hand side is strictly increasing on $\overline{t}>0$ one has
	\[
	 c_{\mathcal{N} }< \frac{1}{4}\left(a_{1}\|u_{0}\|_{E_{1}}^{2}+a_{2}\|v_{0}\|_{E_{2}}^{2}-2\int\lambda (x)u_{0}v_{0}\right)+\mathcal{H}(1)+\left(\frac{1}{4}-\frac{1}{p}\right)\mu\|u_{0}\|_{p}^{p}+\left(\frac{1}{4}-\frac{1}{q}\right)\|v_{0}\|_{q}^{q}.
	\]
	By using Fatou's Lemma we conclude that
	$$
	c_{\mathcal{N} }<\liminf_{n\rightarrow\infty}\left\{I(u_n,v_n)- \displaystyle\frac{1}{4} I'(u_n,v_n)(u_n,v_n)\right\}=c_{\mathcal{N} },
	$$
	which is not possible. Therefore, $I'(u_{0},v_{0})=0$.
\end{proof}

\begin{proof}[Proof of Theorem~\ref{paper1A} completed] In the preceding Proposition we have obtained a nontrivial weak solution $(u_{0},v_{0})\in E $ for System~\eqref{paper1j0}. Thus, $(u_{0},v_{0})\in\mathcal{N} $ and consequently $c_{\mathcal{N} }\leq I (u_{0},v_{0})$. On the other hand, by using \ref{M_4}, Lemma \ref{lambda}, the semicontinuity of norm and Fatou's Lemma, we can deduce that
	\begin{eqnarray*}
		c_{\mathcal{N} }+o_{n}(1) & = &  I (u_{n},v_{n})-\frac{1}{4}  I '(u_{n},v_{n})(u_{n},v_{n})\\
		& \geq  &  I (u_{0},v_{0})-\frac{1}{4}  I '(u_{0},v_{0})(u_{0},v_{0})+o_{n}(1)\\
		& = &  I (u_{0},v_{0})+o_{n}(1),
	\end{eqnarray*}  
which implies that $c_{\mathcal{N} }\geq I (u_{0},v_{0})$. Therefore, $I (u_{0},v_{0})=c_{\mathcal{N} }$. In order to get a nonnegative ground state solution, in view of Lemma \ref{paper1p1}, there exists $t_{0}>0$ such that $(t_{0}|u_{0}|,t_{0}|v_{0}|)\in\mathcal{N} $. Thus, we can deduce that
	$$
	I(t_{0}|u_{0}|,t_{0}|v_{0}|)\leq I(t_{0}u_{0},t_{0}v_{0})\leq I(u_{0},v_{0})=c_{\mathcal{N} },
	$$
	which implies that $(t_{0}|u_{0}|,t_{0}|v_{0}|)$ is also a minimizer of $ I$ on $\mathcal{N} $. Therefore, $(t_{0}|u_{0}|,t_{0}|v_{0}|)$ is a nonnegative ground state solution for System~\eqref{paper1j0}. Now, let us suppose that \ref{paper1A9} holds and let us denote $(\bar{u}_{0},\bar{v}_{0})=(t_{0}|u_{0}|,t_{0}|v_{0}|)$. Since $(\bar{u}_{0},\bar{v}_{0})\neq(0,0)$ we may assume without loss of generality that $\bar{u}_{0}\neq0$. We claim that $\bar{v}_{0}\neq0$. In fact, arguing by contradiction we suppose that $\bar{v}_{0}=0$. Thus, since $(\bar{u}_{0},\bar{v}_{0})$ is a critical point of $I$, we deduce that
	$$
	0=\langle I'(\bar{u}_{0},\bar{v}_{0}),(0,\phi)\rangle=-\int\lambda(x)\bar{u}_{0}\phi, \quad \mbox{for all} \hspace{0,2cm} \phi\in C^{\infty}_{0}(\mathbb{R}^{3}).
	$$
	Since $\lambda(x)$ is positive, we have that $\bar{u}_{0}=0$ which is a contradiction. Therefore, $\bar{v}_{0}\neq0$. By using strong maximum principle (see \cite{trud}) in each equation of \eqref{paper1j0}, we conclude that $(\bar{u}_{0},\bar{v}_{0})$ is positive, which finishes the proof of Theorem~\ref{paper1A}.
\end{proof}

%%%%%%%%%%%%%%%%%%%%%%%%%%%%%%%%%%%%%%%%%%%%%%%%%%%%%%%%%%%%%%%%%%%%%%%%%%%%%%%%%%%%%%%%%%%%%%%%%%%%%%%%%%%%%%%%%%%%%%%%%%%%%%%%%%%%%%%%%%%%%%%%%%%%%%%%%%%%%%%%%%%%%%%%%%%%%%%%%%%%%%%
%                                                                  PROOF OF THEOREM B
%%%%%%%%%%%%%%%%%%%%%%%%%%%%%%%%%%%%%%%%%%%%%%%%%%%%%%%%%%%%%%%%%%%%%%%%%%%%%%%%%%%%%%%%%%%%%%%%%%%%%%%%%%%%%%%%%%%%%%%%%%%%%%%%%%%%%%%%%%%%%%%%%%%%%%%%%%%%%%%%%%%%%%%%%%%%%%%%%%%%%%%

\section{The critical case}\label{s5}

In the preceding Section we have obtained a positive ground state solution to the subcritical System~\eqref{paper1j0}, for any parameter $\mu>0$. In this Section, we are concerned with the critical case, that is, when $2<p<q=6$. Analogously to the subcritical case, we have a minimizing sequence $(u_{n},v_{n})\subset\mathcal{N} $ satisfying \eqref{paper1j6}. Moreover, the sequence is bounded and $(u_{n},v_{n})\rightharpoonup (u_{0},v_{0})$ weakly in $E $. We may assume that, up to a subsequence, $\|v_{n}\|_{6}^{6}\rightarrow A_{\mu}\in[0,+\infty)$. If $A_{\mu}=0$, then the proof follows by the same arguments from the subcritical case, using \cite[Lemma 1.21]{will} to obtain \eqref{paper1gjj1} and a nontrivial critical point for the energy functional $I $. Let us assume $A_{\mu}>0$. In this case, we do not obtain \eqref{paper1gjj1} directly, since $q=2^{*}$ and we are not able to use strong convergence in the argument. In order to overcome this difficulty, we choose $\mu>0$ large enough such that the level $c_{\mathcal{N} }$ be in the suitable range where we can recover the compactness.

\begin{proposition}\label{paper1mu}
	Suppose that \ref{paper1A2}, \ref{paper1A3}, \ref{M_3} and \ref{M_4} hold. There exists $\mu_{0}>0$ such that 
	 \begin{equation}\label{level} 
	  c_{\mathcal{N} }<\left(\frac{1}{4}-\frac{1}{p}\right)\left[\left(\min\{a_{1},a_{2}\}-\delta\right)S\right]^{3/2}, \quad \mbox{for all} \hspace{0,2cm} \mu\geq\mu_{0}.
	 \end{equation}  
\end{proposition}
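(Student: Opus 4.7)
The plan is to use the characterization $c_{\mathcal{N}} = \inf_{(u,v)\in E\setminus\{(0,0)\}} \max_{t\ge 0} I(tu,tv)$ recorded in the remark ending Section~\ref{s3}, and to construct a test pair for which the right-hand side can be driven to zero as $\mu \to \infty$. The key observation is that the critical exponent $q = 6$ appears only in the $v$-component, so by taking a test pair of the form $(u_*, 0)$ one completely sidesteps the critical term and is reduced to a subcritical one-variable problem whose penalty is controlled by $\mu$.

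First I would fix any $u_* \in C_0^\infty(\mathbb{R}^{3}) \setminus \{0\}$, so that $u_* \in E_1$ and $\|u_*\|_p > 0$. Lemma~\ref{paper1p1} then produces $t_\mu > 0$ such that $(t_\mu u_*, 0) \in \mathcal{N}$ and $I(t_\mu u_*, 0) = \max_{t \ge 0} I(tu_*, 0)$; the proof of that lemma relies only on $p > 4$ together with hypotheses \ref{M_3}--\ref{M_4}, and so remains valid in the critical regime when invoked at a pair with vanishing second component. This gives $c_{\mathcal{N}} \le \max_{t \ge 0} I(tu_*, 0)$.

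Next I would estimate $I(tu_*, 0)$ from above. Setting $s = t = 0$ in the chain of inequalities in \ref{M_4} forces $\alpha(0) = \beta(0) = 0$, and combining that same chain with the growth bound $\alpha'(s) \le b_1 s$ yields $\alpha(\sigma) \le \alpha'(\sigma)\sigma \le b_1 \sigma^2$ for every $\sigma \ge 0$. Therefore
\[
I(tu_*, 0) \leq \tfrac{a_1}{2}\, t^2 \|u_*\|_{E_1}^2 + \tfrac{b_1}{2}\, t^4 \|u_*\|_{E_1}^4 - \tfrac{\mu}{p}\, t^p \|u_*\|_p^p = A\, t^2 + B\, t^4 - \mu C\, t^p,
\]
with fixed positive constants $A, B, C$ depending only on $u_*$. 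Since $p > 4$, the unique positive critical point $\tilde t_\mu$ of the right-hand side satisfies $2A + 4B \tilde t_\mu^{2} = p \mu C \tilde t_\mu^{p-2}$, from which $\tilde t_\mu \to 0$ as $\mu \to \infty$ (and in fact $\tilde t_\mu = O(\mu^{-1/(p-2)})$). Consequently $\max_{t \ge 0}(A\, t^2 + B\, t^4 - \mu C\, t^p) \le A \tilde t_\mu^{2} + B \tilde t_\mu^{4} \to 0$.

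Combining the two preceding steps yields $c_{\mathcal{N}} \to 0$ as $\mu \to \infty$, so choosing $\mu_0$ large enough that the above bound lies below the fixed positive right-hand side of \eqref{level} for every $\mu \ge \mu_0$ completes the proof. The only delicate point is verifying that Lemma~\ref{paper1p1} applies to a pair whose second component vanishes, but this is immediate because the decisive ingredient in its proof is $g(t) \to -\infty$ as $t \to \infty$, which holds as soon as $p > 4$. The customary Brezis--Nirenberg difficulty of squeezing a test function's energy strictly below the Sobolev threshold is here entirely avoided because the parameter $\mu$ itself drives the energy down.
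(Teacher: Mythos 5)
Your proof is correct. It rests on the same mechanism as the paper's own argument: $c_{\mathcal{N}}$ is bounded above by the energy of the Nehari projection of a fixed test element, and the factor $\mu$ in front of the subcritical term forces the projection parameter, and hence the energy, to zero as $\mu\to\infty$; no Talenti-type concentration estimate is needed to get below the threshold $\left(\frac14-\frac1p\right)\left[\left(\min\{a_{1},a_{2}\}-\delta\right)S\right]^{3/2}$. The difference lies in the choice of test element. The paper keeps both components nonzero and rescales them as $(\mu u,\mu v)$, which obliges it first to prove that $t_{\mu}\mu$ is bounded via the Nehari identity and then to rule out $t_{\mu}\mu\to t_{0}>0$ by a contradiction argument before concluding. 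Your choice $(u_{*},0)$ kills the critical term $\|v\|_{6}^{6}$ outright and reduces everything to maximizing the explicit polynomial $At^{2}+Bt^{4}-\mu Ct^{p}$, which is more elementary, yields the quantitative rate $\tilde t_{\mu}=O(\mu^{-1/(p-2)})$, and makes the monotonicity of the bound in $\mu$ (hence the ``for all $\mu\geq\mu_{0}$'' clause) transparent. Your auxiliary observations also check out: hypothesis \ref{M_4} evaluated at $s=t=0$ does give $\alpha(0)=\beta(0)=0$ and hence $\alpha(\sigma)\leq\alpha^{\prime}(\sigma)\sigma\leq b_{1}\sigma^{2}$, and Lemma~\ref{paper1p1} applies verbatim to a pair with vanishing second component (the $\beta$-term in the fibering equation simply drops out, and the right-hand side $\mu\bar t^{\,p-4}\|u_{*}\|_{p}^{p}$ remains strictly increasing since $p>4$ and $\|u_{*}\|_{p}>0$).
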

\begin{proof}
	Let $(u,v)\in E$ be nonnegative and $u,v\not\equiv0$. We define $u_{\mu}=\mu u$ and $v_{\mu}=\mu v$. It follows from Lemma~\ref{paper1p1} that for any $\mu>0$, there exists a unique $t_{\mu}>0$ such that $(t_{\mu}u_{\mu},t_{\mu}v_{\mu})\in\mathcal{N} $. Hence, we have
	\begin{align}\label{paper1j51}    
	 (t_{\mu}\mu)^{2}\left(a_{1}\|u\|_{E_{1}}^{2}+a_{2}\|v\|_{E_{2}}^{2}-2\int\lambda (x)uv\right)+(t_{\mu}\mu)^{2}\alpha^{\prime}(\|t_{\mu}u_{\mu}\|_{E_{1}}^{2})\|u\|_{E_{1}}^{2}+\nonumber\\+(t_{\mu}\mu)^{2}\beta^{\prime}(\|t_{\mu}v_{\mu}\|_{E_{2}}^{2})\|v\|_{E_{2}}^{2} =\mu (t_{\mu}\mu)^{p}\|u\|_{p}^{p}+(t_{\mu}\mu)^{6}\|v\|_{6}^{6},
	\end{align}
	which together with \ref{M_4} implies that
	 $$
	  (t_{\mu}\mu)^{2}\left(a_{1}\|u\|_{E_{1}}^{2}+a_{2}\|v\|_{E_{2}}^{2}-2\int\lambda (x)uv\right)+(t_{\mu}\mu)^{4}(b_{1}\|u\|_{E_{1  }}^{4}+b_{2}\|v\|_{E_{2  }}^{4})\geq (t_{\mu}\mu)^{6}\|v\|_{6}^{6}.
	 $$
	Therefore, $(t_{\mu}\mu)_{\mu}$ is a real bounded family. Hence, up to a subsequence, $t_{\mu}\mu\rightarrow t_{0}\geq0$, as $\mu\rightarrow+\infty$. We claim that $t_{0}=0$. Indeed, let us suppose by contradiction that $t_{0}>0$. Thus, it follows from \eqref{paper1j51} that
	$$
	 (t_{\mu}\mu)^{2}\left(a_{1}\|u\|_{E_{1}}^{2}+a_{2}\|v\|_{E_{2}}^{2}-2\int\lambda (x)uv\right)+(t_{\mu}\mu)^{4}(b_{1}\|u\|_{E_{1  }}^{4}+b_{2}\|v\|_{E_{2  }}^{4})\geq\mu (t_{\mu}\mu)^{p}\|u\|_{p}^{p},
	$$
	which is not possible since the right-hand side goes to infinity as $\mu\rightarrow+\infty$. Therefore, $t_{\mu}\mu\rightarrow0$ as $\mu\rightarrow+\infty$. By using \ref{M_4} and the fact that $(t_{\mu}u_{\mu},t_{\mu}v_{\mu})\in\mathcal{N} $, we have
	 $$
	  c_{\mathcal{N}} \leq (t_{\mu}\mu)^{2}\left(a_{1}\|u\|_{E_{1}}^{2}+a_{2}\|v\|_{E_{2}}^{2}-2\int\lambda (x)uv\right)+(t_{\mu}\mu)^{4}(b_{1}\|u\|_{E_{1  }}^{4}+b_{2}\|v\|_{E_{2  }}^{4}).
	 $$
	Therefore, there exists $\mu_{0}>0$ such that \eqref{level} holds, for all $\mu\geq\mu_{0}$.
\end{proof}

\begin{corollary}
	Assume that \ref{paper1A2}, \ref{paper1A3}, \ref{M_3} and \ref{M_4} hold. Let $(u_{n},v_{n})$ be the minimizing sequence satisfying \eqref{paper1j6}. If $\mu\geq\mu_{0}$, then there exists $(y_{n})\subset\mathbb{R}^{3}$ and constants $R,\eta>0$ such that
	\begin{equation}\label{paper1j500}
	\liminf_{n\rightarrow\infty}\int_{B_{R}(y_{n})}(u_{n}^{2}+v_{n}^{2})\geq\eta>0.
	\end{equation}
\end{corollary}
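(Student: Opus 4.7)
The plan is to argue by contradiction: suppose the conclusion fails, so that $(u_n,v_n)$ vanishes in the sense of Lions, namely
\[
\lim_{n\to\infty}\sup_{y\in\mathbb{R}^3}\int_{B_R(y)}(u_n^2+v_n^2)\,\mathrm{d}x=0 \quad \text{for every } R>0.
\]
Since $(u_n,v_n)$ is bounded in $E\hookrightarrow H^1(\mathbb{R}^3)\times H^1(\mathbb{R}^3)$, the standard concentration-compactness result \cite[Lemma 1.21]{will} forces $u_n\to 0$ and $v_n\to 0$ strongly in $L^r(\mathbb{R}^3)$ for every $r\in(2,6)$; in particular $\|u_n\|_p\to 0$ because $p\in(4,6)$. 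The critical term $\|v_n\|_6^6\to A_\mu$ of course need not vanish.

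Next, I exploit the Nehari constraint \eqref{paper1j8} together with Lemma~\ref{lambda} and the positivity of $\alpha'(\cdot),\beta'(\cdot)$ to obtain
\[
(\min\{a_1,a_2\}-\delta)\|(u_n,v_n)\|_E^2 \;\le\; \mu\|u_n\|_p^p+\|v_n\|_6^6.
\]
Setting $L:=\lim_n\|(u_n,v_n)\|_E^2$ along a subsequence (Lemma~\ref{paper1nehari} ensures $L>0$) and letting $n\to\infty$ using $\|u_n\|_p\to 0$, I get $(\min\{a_1,a_2\}-\delta)L\le A_\mu$. The sharp Sobolev inequality \eqref{sharp} together with $V_2\ge 0$ provides the opposite-flavor estimate $\|v_n\|_6^6\le S^{-3}\|v_n\|_{E_2}^6\le S^{-3}\|(u_n,v_n)\|_E^6$, hence $A_\mu\le S^{-3}L^3$. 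Combining the two and dividing by $L$ forces $L^2\ge(\min\{a_1,a_2\}-\delta)S^3$, whence
\[
A_\mu \;\ge\; (\min\{a_1,a_2\}-\delta)\,L \;\ge\; \bigl[(\min\{a_1,a_2\}-\delta)\,S\bigr]^{3/2}.
\]

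For the energy estimate, I rewrite $c_{\mathcal{N}}+o_n(1)=I(u_n,v_n)-\tfrac14 I'(u_n,v_n)(u_n,v_n)$, expand, and invoke hypothesis \ref{M_4}, which guarantees
\[
\tfrac12\bigl[\alpha(\|u_n\|_{E_1}^2)+\beta(\|v_n\|_{E_2}^2)\bigr]-\tfrac14\bigl[\alpha'(\|u_n\|_{E_1}^2)\|u_n\|_{E_1}^2+\beta'(\|v_n\|_{E_2}^2)\|v_n\|_{E_2}^2\bigr]\ge 0,
\]
together with Lemma~\ref{lambda} to discard the remaining nonnegative quadratic and Kirchhoff contributions. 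What is left is
\[
c_{\mathcal{N}}+o_n(1)\;\ge\;\Bigl(\tfrac14-\tfrac1p\Bigr)\mu\|u_n\|_p^p+\tfrac{1}{12}\|v_n\|_6^6,
\]
and passing to the limit gives $c_{\mathcal{N}}\ge \tfrac{1}{12}A_\mu\ge \tfrac{1}{12}\bigl[(\min\{a_1,a_2\}-\delta)S\bigr]^{3/2}$. Since $p\in(4,6)$ forces $\tfrac14-\tfrac1p<\tfrac14-\tfrac16=\tfrac{1}{12}$, this strictly exceeds the upper bound \eqref{level} supplied by Proposition~\ref{paper1mu}, the desired contradiction.

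The main obstacle, in my view, is orchestrating the two-sided control on $A_\mu$: the Nehari identity provides a linear-in-$L$ lower bound, while Sobolev gives a cubic-in-$L$ upper bound, and only by matching exponents can one extract an intrinsic lower estimate on $A_\mu$ involving the sharp constant $S$. Once this is in place, everything else is a standard concentration-compactness clean-up, with the final contradiction turning on the strictly positive gap $\tfrac{1}{12}-\bigl(\tfrac14-\tfrac1p\bigr)$ that is available precisely because $p$ is strictly less than $6$.
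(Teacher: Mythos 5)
Your proposal is correct and follows essentially the same route as the paper: contradiction via Lions' vanishing lemma, the Nehari identity plus the sharp Sobolev inequality to force $A_{\mu}\geq\left[\left(\min\{a_{1},a_{2}\}-\delta\right)S\right]^{3/2}$, and then the $I-\tfrac{1}{4}I'$ energy estimate to contradict Proposition~\ref{paper1mu}. The only cosmetic differences are that you introduce the limit $L$ of $\|(u_n,v_n)\|_{E}^{2}$ explicitly and keep the sharper coefficient $\tfrac{1}{12}$ where the paper weakens it to $\tfrac14-\tfrac1p$; both yield the same contradiction.
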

\begin{proof}
Arguing by contradiction, we suppose that \eqref{paper1j500} does not hold. Hence, for any $R>0$ we have
$$
\lim_{n\rightarrow\infty}\sup_{y\in\mathbb{R}^{3}}\int_{B_{R}(y)}(u_{n}^{2}+v_{n}^{2})=0.
$$
In light of \cite[Lemma 1.21]{will} (see also \cite{Lio}) we have that $u_{n}\rightarrow0$ and $v_{n}\rightarrow0$ strongly in $L^{p}(\mathbb{R}^{3})$, for $2<p<2^{*}$. Thus, it follows from \eqref{paper1j8} that
 \begin{align}\label{gjj2}
  a_{1}\|u_{n}\|_{E_{1}}^{2}+a_{2}\|v_{n}\|_{E_{2}}^{2}-2\int\lambda (x)u_{n}v_{n}+\alpha^{\prime}(\|u_{n}\|_{E_{1}}^{2})\|u_{n}\|_{E_{1}}^{2}+\beta^{\prime}(\|v_{n}\|_{E_{2}}^{2})\|v_{n}\|_{E_{2}}^{2}=A_{\mu}+o_{n}(1).
 \end{align}
By using \eqref{sharp}, Lemma~\ref{lambda} and \eqref{gjj2} we deduce that
 \begin{eqnarray*}
  \left(\min\{a_{1},a_{2}\}-\delta\right)SA_{\mu}^{1/3}+o_{n}(1) & = & \left(\min\{a_{1},a_{2}\}-\delta\right)S\|v_{n}\|_{6}^{2}\\
                                                                          & \leq & \left(\min\{a_{1},a_{2}\}-\delta\right)\|(u_{n},v_{n})\|^{2}_{E}\\
                                                                          & \leq & a_{1}\|u_{n}\|_{E_{1}}^{2}+a_{2}\|v_{n}\|_{E_{2}}^{2} - 2 \displaystyle\int\lambda (x)u_{n}v_{n}\\
                                                                          & \leq & A_{\mu}+o_{n}(1),
 \end{eqnarray*}
which implies that
 \begin{equation}\label{gjj3}
  A_{\mu}\geq\left[\left(\min\{a_{1},a_{2}\}-\delta\right)S \right]^{3/2}.
 \end{equation}
By using \ref{M_4}, \eqref{paper1j6}, \eqref{gjj2} and \eqref{gjj3} we can deduce that
 \begin{eqnarray*}
  c_{\mathcal{N} } +o_{n}(1) & = & I (u_{n},v_{n})-\frac{1}{4}  I '(u_{n},v_{n})(u_{n},v_{n})\\
                                         & \geq & \left(\frac{1}{4}-\frac{1}{p}\right)A_{\mu}+o_{n}(1)\\ 
                                         & \geq & \left(\frac{1}{4}-\frac{1}{p}\right)\left[\left(\min\{a_{1},a_{2}\}-\delta\right)S\right]^{3/2},
\end{eqnarray*}
which contradicts Proposition~\ref{paper1mu}.
\end{proof} 

\begin{proof}[Proof of Theorem \ref{paper1B} completed]
 Since \eqref{paper1j500} holds, we can introduce the shift sequence $(\tilde{u}_{n}(x),\tilde{v}_{n}(x))=(u_{n}(x+y_{n}),v_{n}(x+y_{n}))$ and we are able to repeat the same arguments used in the proof of Theorem~\ref{paper1A} to finish the proof of Theorem~\ref{paper1B}.
\end{proof}

%%%%%%%%%%%%%%%%%%%%%%%%%%%%%%%%%%%%%%%%%%%%%%%%%%%%%%%%%%%%%%%%%%%%%%%%%%%%%%%%%%%%%%%%%%%%%%%%%%%%%%%%%%%%%%%%%%%%%%%%%%%%%%%%%%%%%%%%%%%%%%%%%%%%%%%%%%%%%%%%%%%%%%%%%%%%%%%%%%%%%%%
%                                                                  PROOF OF THEOREM C
%%%%%%%%%%%%%%%%%%%%%%%%%%%%%%%%%%%%%%%%%%%%%%%%%%%%%%%%%%%%%%%%%%%%%%%%%%%%%%%%%%%%%%%%%%%%%%%%%%%%%%%%%%%%%%%%%%%%%%%%%%%%%%%%%%%%%%%%%%%%%%%%%%%%%%%%%%%%%%%%%%%%%%%%%%%%%%%%%%%%%%%

\section{The nonexistence result}\label{s6}

In this section we are concerned to prove that does not exist positive classical solution for System~\eqref{paper1j0} when $p=q=6$. Let us denote 
\[
f(x,u,v)=-(a_{1}+\alpha^{\prime}(\|u\|_{E_{1}}^{2}))V_{1}(x)u+\mu|u|^{4}u+\lambda(x)v,
\]
\[
g(x,u,v)=-(a_{2}+\beta^{\prime}(\|v\|_{E_{2}}^{2}))V_{2}(x)v+|v|^{4}v+\lambda(x)u.
\]
Thus, we write System~\eqref{paper1j0} in the following form
 \begin{equation}\label{ne1}
  \left\{
  \begin{array}{lr}
  -(a_{1}+\alpha^{\prime}(\|u\|_{E_{1}}^{2}))\Delta u = f(x,u,v), & x\in\mathbb{R}^{3},\\
  -(a_{2}+\beta^{\prime}(\|v\|_{E_{2}}^{2}))\Delta v = g(x,u,v),    & x\in\mathbb{R}^{3}.
  \end{array}
  \right.
 \end{equation}
In order to obtain a nonexistence result we introduce the following Pohozaev type identity:
 \begin{lemma}\label{poho}
 	If $(u,v)\in E$ is a classical solution of System~\eqref{ne1}, then satisfies the following Pohozaev identity
 	 \begin{align*}
 	  (a_{1}+\alpha^{\prime}(\|u\|_{E_{1}}^{2}))\int\left(|\nabla u|^{2}+3V_{1}(x)u^{2}\right)+(a_{2}+\beta^{\prime}(\|v\|_{E_{2}}^{2}))\int\left(|\nabla v|^{2}+3V_{2}(x)v^{2}\right)=\\=2\int\langle\nabla\lambda(x),x\rangle uv-\int\left((a_{1}+\alpha^{\prime}(\|u\|_{E_{1}}^{2}))\langle\nabla V_{1}(x),x\rangle u^{2}+(a_{2}+\beta^{\prime}(\|v\|_{E_{2}}^{2}))\langle\nabla V_{2}(x),x\rangle v^{2}\right)+\\+\int\left(\mu u^{6}+v^{6}+6\lambda(x)uv\right).
 	 \end{align*}
 \end{lemma}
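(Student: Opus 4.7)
The strategy is the classical Pohozaev multiplier method, adapted to a coupled Kirchhoff-Schr\"odinger system. Since $(u,v)$ is a classical solution in $E$, the coefficients $a_{1}+\alpha'(\|u\|_{E_{1}}^{2})$ and $a_{2}+\beta'(\|v\|_{E_{2}}^{2})$ are fixed positive constants that depend only on $(u,v)$ through its fixed norms, so each equation of \eqref{ne1} behaves like a linear Laplace equation with these scalar multipliers. Accordingly, I plan to multiply the first equation by $\langle x,\nabla u\rangle$ and the second by $\langle x,\nabla v\rangle$, integrate over $B_{R}\subset\mathbb{R}^{3}$, add the resulting identities, and finally let $R\to\infty$ along a suitable subsequence.

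For the diffusion side, the standard identity
\[
\int_{B_{R}}(-\Delta w)\langle x,\nabla w\rangle\,\mathrm{d}x=-\frac{N-2}{2}\int_{B_{R}}|\nabla w|^{2}\,\mathrm{d}x+\mathcal{B}_{R}(w),
\]
valid for $w\in H^{2}(B_{R})$ (with $\mathcal{B}_{R}(w)$ a boundary integral involving $|\nabla w|^{2}$ and $(\partial_{\nu}w)\langle x,\nabla w\rangle$), specialised to $N=3$, produces the $-\tfrac12(a_{i}+\alpha'\text{ or }\beta')\int|\nabla w|^{2}$ pieces on the left. On the nonlinearity side I would use the three elementary identities
\[
V_{i}(x)w\langle x,\nabla w\rangle=\tfrac{1}{2}V_{i}\langle x,\nabla w^{2}\rangle,\qquad |w|^{4}w\langle x,\nabla w\rangle=\tfrac{1}{6}\langle x,\nabla w^{6}\rangle,
\]
and, crucially, for the coupling,
\[
\lambda(x)v\langle x,\nabla u\rangle+\lambda(x)u\langle x,\nabla v\rangle=\lambda(x)\langle x,\nabla(uv)\rangle,
\]
which is why the multiplication must be carried out symmetrically in the two equations. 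After one more integration by parts, using $\nabla\cdot(V_{i}x)=3V_{i}+\langle\nabla V_{i},x\rangle$ and $\nabla\cdot(\lambda x)=3\lambda+\langle\nabla\lambda,x\rangle$, each of these three terms produces exactly the bulk expressions that appear in the claimed identity (the $3V_{i}u^{2}$ contribution moves to the left, and the $\mu u^{6}+v^{6}+6\lambda uv+2\langle\nabla\lambda,x\rangle uv-\sum(a_{i}+\ldots)\langle\nabla V_{i},x\rangle w^{2}$ contributions sit on the right), together with a collection of surface terms on $\partial B_{R}$.

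The main technical step, and the one I regard as the key obstacle, is to show that the full boundary contribution vanishes along some sequence $R_{n}\to\infty$. The boundary terms are of the form
\[
\int_{\partial B_{R}}\!\bigl(\partial_{\nu}w\,\langle x,\nabla w\rangle-\tfrac{R}{2}|\nabla w|^{2}+\tfrac{R}{2}V_{i}w^{2}+\tfrac{R}{6}w^{6}+\tfrac{R}{2}\lambda uv\bigr)\,\mathrm{d}S.
\]
Since $(u,v)\in E\subset H^{1}(\mathbb{R}^{3})\cap L^{6}(\mathbb{R}^{3})$, elliptic regularity applied to the system \eqref{ne1} (the coefficients on the left are constants, the right-hand sides are in $L^{2}_{\mathrm{loc}}$) yields $(u,v)\in H^{2}_{\mathrm{loc}}(\mathbb{R}^{3})$, so the surface integrals are well defined. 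Because $\int_{0}^{\infty}\!\int_{\partial B_{R}}(|\nabla w|^{2}+V_{i}w^{2}+w^{6})\,\mathrm{d}S\,\mathrm{d}R<\infty$, a standard coarea/layer-cake argument provides a sequence $R_{n}\to\infty$ along which $R_{n}\int_{\partial B_{R_{n}}}(|\nabla w|^{2}+V_{i}w^{2}+w^{6})\,\mathrm{d}S\to0$, killing all boundary terms (the $\lambda uv$ piece is controlled by $V_{1}u^{2}+V_{2}v^{2}$ via \ref{paper1A3}). Sending $R_{n}\to\infty$ in the identity obtained on $B_{R_{n}}$ and multiplying through by $-2$ to match signs, I obtain exactly the identity stated in Lemma~\ref{poho}.
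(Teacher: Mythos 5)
Your proposal is correct, and the core of the argument (multiply the $i$-th equation by $\langle x,\nabla\cdot\rangle$, exploit that $a_{1}+\alpha'(\|u\|_{E_{1}}^{2})$ and $a_{2}+\beta'(\|v\|_{E_{2}}^{2})$ are frozen constants, treat the coupling symmetrically via $\lambda\langle x,\nabla(uv)\rangle$, and use $\dive(V_{i}x)=3V_{i}+\langle\nabla V_{i},x\rangle$, $\dive(\lambda x)=3\lambda+\langle\nabla\lambda,x\rangle$) is exactly what the paper does; your sign bookkeeping, including the final multiplication by $-2$, checks out against the stated identity. The only genuine difference is how the truncation at infinity is handled. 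You integrate over balls $B_{R}$, collect explicit surface integrals on $\partial B_{R}$, and kill them along a subsequence $R_{n}\to\infty$ via the coarea/layer-cake observation that $R\mapsto\int_{\partial B_{R}}(|\nabla w|^{2}+V_{i}w^{2}+w^{6})\,\mathrm{d}S$ is in $L^{1}(0,\infty)$, with the $\lambda uv$ trace controlled through \ref{paper1A3}. The paper instead uses a smooth radial cut-off $\psi_{n}(x)=\psi(|x|^{2}/n^{2})$, which produces no boundary integrals at all but instead error terms of the form $\int F\langle\nabla\psi_{n},x\rangle$ supported in an annulus, disposed of by dominated convergence. The two devices are interchangeable here and rest on the same integrability input ($|\nabla w|^{2}$, $V_{i}w^{2}$, $w^{6}$, $\sqrt{V_{1}V_{2}}|u||v|\in L^{1}$, and, via \ref{V4}--\ref{V5}, the terms $\langle\nabla V_{i},x\rangle w^{2}$ and $\langle\nabla\lambda,x\rangle uv$ as well); your version makes the tail estimate more explicit at the price of needing the subsequence trick, while the paper's cut-off avoids traces altogether. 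Either way the conclusion is the same identity.
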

\begin{proof}
	The proof is quite similar to \cite{jr1,will,li} but for the sake of convenience we give a sketch here. Let $(u,v)\in E$ be a classical solution of System~\eqref{ne1}. We introduce the cut-off function $\psi\in C^{\infty}_{0}(\mathbb{R})$ given by $\psi(t)=1$ if $|t|\leq 1$, $\psi(t)=0$ if $|t|\geq 2$ and $|\psi^{\prime}(t)|\leq C$, for some $C>0$. Moreover, we define $\psi_{n}(x)=\psi\left(|x|^{2}/n^{2}\right)$ and we note that $\nabla\psi_{n}(x)=\frac{2}{n^{2}}\psi^{\prime}\left(|x|^{2}/n^{2}\right)x$. Multiplying the first equation in \eqref{ne1} by the factor $\langle\nabla u,x\rangle\psi_{n}$, the second equation by the factor $\langle\nabla v,x\rangle\psi_{n}$, summing and integrating we get
	 \begin{align}\label{paper1pohoz}
	  -\int\left[(a_{1}+\alpha^{\prime}(\|u\|_{E_{1}}^{2}))\Delta u\langle\nabla u,x\rangle+(a_{2}+\beta^{\prime}(\|v\|_{E_{2}}^{2}))\Delta v\langle\nabla v,x\rangle\right]\psi_{n}=\nonumber\\=\int\left[f(x,u,v)\langle\nabla u,x\rangle+g(x,u,v)\langle\nabla v,x\rangle\right]\psi_{n}
	 \end{align}
	The idea is to take the limit as $n\rightarrow+\infty$ in \eqref{paper1pohoz}. Similarly to \cite{jr1,will}, we deduce that
	 \begin{equation*}\label{paper1p7}
	 -(a_{1}+\alpha^{\prime}(\|u\|_{E_{1}}^{2}))\lim_{n\rightarrow\infty}\int\Delta u\langle\nabla u,x\rangle=-(a_{1}+\alpha^{\prime}(\|u\|_{E_{1}}^{2}))\frac{1}{2}\int|\nabla u|^{2},
	 \end{equation*}
	 \begin{equation*}\label{paper1p10}
	 -(a_{2}+\beta^{\prime}(\|v\|_{E_{2}}^{2}))\lim_{n\rightarrow\infty}\int\Delta v\langle\nabla v,x\rangle=-(a_{2}+\beta^{\prime}(\|v\|_{E_{2}}^{2}))\frac{1}{2}\int|\nabla v|^{2}.
	 \end{equation*}
	In order to study the limit in the right-hand side of \eqref{paper1pohoz}, we note that
	$$
	\mbox{div}\left(\psi_{n}F(x,u,v)x\right) =\psi_{n}\langle\nabla F(x,u,v),x\rangle+F(x,u,v)\langle\nabla\psi_{n},x\rangle+3\psi_{n}F(x,u,v),
	$$
	where $F(x,u,v)=-(a_{1}+\alpha^{\prime}(\|u\|_{E_{1}}^{2}))(1/2)V_{1}(x)u^{2}+(\mu/6)u^{6}+\lambda(x)uv$. Notice that
	 \begin{align*}
	  \langle\nabla F(x,u,v),x\rangle=-(a_{1}+\alpha^{\prime}(\|u\|_{E_{1}}^{2}))\frac{1}{2}\langle\nabla V_{1}(x),x\rangle u^{2}+f(x,u,v)\langle\nabla u,x\rangle+\\+\langle\nabla\lambda(x),x\rangle uv+\lambda(x)\langle u\nabla v,x\rangle.
	 \end{align*}
	Thus, we have that
	 	\begin{align*}
	 \int f(x,u,v)\langle\nabla u,x\rangle\psi_{n} = \int\left(\mbox{div}(\psi_{n}F(x,u,v)x)-F(x,u,v)\langle\nabla\psi_{n},x\rangle-3F(x,u,v)\psi_{n}\right)+\\ +\int\left[(a_{1}+\alpha^{\prime}(\|u\|_{E_{1}}^{2}))\frac{1}{2}\langle\nabla V_{1}(x),u\rangle u^{2}-\langle\nabla\lambda(x),x\rangle uv-\lambda(x)\langle u\nabla v,x\rangle\right]\psi_{n}.
	 \end{align*}
	Analogously, denoting $G(x,u,v)=-(a_{2}+\beta^{\prime}(\|v\|_{E_{2}}^{2}))(1/2)V_{2}(x)v^{2}+(1/6)v^{6}+\lambda(x)uv$ we deduce 
	 \begin{align*}
	  \int g(x,u,v)\langle\nabla v,x\rangle\psi_{n}  = \int \left(\mbox{div}(\psi_{n}G(x,u,v)x)-G(x,u,v)\langle\nabla\psi_{n},x\rangle -3G(x,u,v)\psi_{n}\right)+ \\  +\int \left[(a_{2}+\beta^{\prime}(\|v\|_{E_{2}}^{2}))\frac{1}{2}\langle\nabla V_{2}(x),v\rangle v^{2}-\langle\nabla\lambda(x),x\rangle uv-\lambda(x)\langle v\nabla u,x\rangle\right]\psi_{n}.
	 \end{align*}
	By using integration by parts we conclude that
	 \[
	  -\lim_{n\rightarrow\infty}\int \lambda(x)\langle u\nabla v+v\nabla u,x\rangle\psi_{n} =\int\langle\nabla\lambda(x),x\rangle uv+3\int \lambda(x)uv.
	 \]
	Therefore, using integration by parts and Lebesgue dominated convergence theorem we obtain
	\begin{align*}
	\lim_{n\rightarrow\infty}\int \left(f(x,u,v)\langle\nabla u,x\rangle+g(x,u,v)\langle\nabla v,x\rangle\right)\psi_{n}  = -3\int (F(x,u,v)+G(x,u,v))+ \\ +\frac{1}{2}\int \left((a_{1}+\alpha^{\prime}(\|u\|_{E_{1}}^{2}))\langle\nabla V_{1}(x),x\rangle u^{2}+(a_{2}+\beta^{\prime}(\|v\|_{E_{2}}^{2}))\langle\nabla V_{2}(x),x\rangle v^{2}\right) - \\-\int \langle\nabla\lambda(x),x\rangle uv +3\int \lambda(x)uv.
	\end{align*} 
	Replacing $F(x,u,v)$ and $G(x,u,v)$ in the equation above, we get the right-hand side of \eqref{paper1pohoz} which finishes the proof.
\end{proof}

Now, we are able to prove that System~\eqref{ne1} does not admit positive classical solution.

\begin{proof}[Proof of Theorem~\ref{paper1C} completed]
	Let $(u,v)\in E$ be a positive classical solution of System~\eqref{ne1}. By the definition of weak solution we have
	 \begin{equation}\label{n2}
	  (a_{1}+\alpha^{\prime}(\|u\|_{E_{1}}^{2}))\|u\|_{E_{1}}^{2}+(a_{2}+\beta^{\prime}(\|v\|_{E_{2}}^{2}))\|v\|_{E_{2}}^{2}=\mu\int u^{6}+\int v^{6}+2\int\lambda(x)uv.
	 \end{equation}
	Combining Lemma~\ref{poho} and \eqref{n2} we deduce that
	 \begin{align*}
	  \int\left[(a_{1}+\alpha^{\prime}(\|u\|_{E_{1}}^{2}))V_{1}(x)u^{2}+(a_{2}+\beta^{\prime}(\|v\|_{E_{2}}^{2}))V_{2}(x)v^{2}-2\lambda(x)uv\right]=\int\langle\nabla \lambda(x),x\rangle uv-\\
	  -\frac{1}{2}\int\left[(a_{1}+\alpha^{\prime}(\|u\|_{E_{1}}^{2}))\langle\nabla V_{1}(x),x\rangle u^{2}+(a_{2}+\beta^{\prime}(\|v\|_{E_{2}}^{2}))\langle\nabla V_{2}(x),x\rangle v^{2}\right],
	 \end{align*}
	which together with assumptions \ref{V4} and \ref{V5} implies that
	 \begin{equation}\label{gjj11}
	  \int\left[(a_{1}+\alpha^{\prime}(\|u\|_{E_{1}}^{2}))V_{1}(x)u^{2}+(a_{2}+\beta^{\prime}(\|v\|_{E_{2}}^{2}))V_{2}(x)v^{2}-2\lambda(x)uv\right]\leq 0.
	 \end{equation}
	On the other hand, by using assumption \ref{paper1A3} and \eqref{gjj11} we have
	 \begin{eqnarray*}
	  0 & \leq & \frac{1}{\min\{a_{1},a_{2}\}}\int\left(a_{1}V_{1}(x)u^{2}+a_{2}V_{2}(x)v^{2}-2\min\{a_{1},a_{2}\}\sqrt{V_{1}(x)V_{2}(x)}uv\right)\\
	    & \leq & \frac{1}{\min\{a_{1},a_{2}\}}\int\left(a_{1}V_{1}(x)u^{2}+a_{2}V_{2}(x)v^{2}-2\frac{\min\{a_{1},a_{2}\}}{\delta}\lambda(x)uv\right)\\
	    & < & \frac{1}{\min\{a_{1},a_{2}\}}\int\left[(a_{1}+\alpha^{\prime}(\|u\|_{E_{1}}^{2}))V_{1}(x)u^{2}+(a_{2}+\beta^{\prime}(\|v\|_{E_{2}}^{2}))V_{2}(x)v^{2}-2\lambda(x)uv\right]\leq0,
	 \end{eqnarray*}
   which is a contradiction and finishes the proof.
\end{proof}

\section*{Acknowledgement(s)}

The authors would like to express their sincere gratitude to the referee for careful reading the manuscript and valuable comments and suggestions.

%%%%%%%%%%%%%%%%%%%%%%%%%%%%%%%%%%%%%%%%%%%%%%%%%%%%%%%%%%%%%%%%%%%%%%%%%%%%%%%%%%%%%%%%%%%%%%%%%%%%%%%%%%%%%%%%%%%%%%%%%%%%%%%%%%%%%%%%%%%%%%%%%%%%%%%%%%%%%%%%%%%%%%%%%%%%%%%%%%%%%%%
%                                                                         BIBLIOGRAPHY
%%%%%%%%%%%%%%%%%%%%%%%%%%%%%%%%%%%%%%%%%%%%%%%%%%%%%%%%%%%%%%%%%%%%%%%%%%%%%%%%%%%%%%%%%%%%%%%%%%%%%%%%%%%%%%%%%%%%%%%%%%%%%%%%%%%%%%%%%%%%%%%%%%%%%%%%%%%%%%%%%%%%%%%%%%%%%%%%%%%%%%%

\end{document}